\newcommand\cyr{%
\renewcommand\rmdefault{wncyr}%
\renewcommand\sfdefault{wncyss}%
\renewcommand\encodingdefault{OT2}%
\normalfont \selectfont} \DeclareTextFontCommand{\textcyr}{\cyr}
\def\N{{\mathbb N}} 
\def\Z{{\mathbb Z}}
\def\R{{\mathbb R}}
\def\C{{\mathbb C}}
\def\gdw{\iff}
\def\Norm{\Vert}
\def\Im{\mathrm{Im}\,}
\def\Re{\mathrm{Re}\,}
\def\supp{\mathrm{supp}\,}
\def\skpl{\langle}      
\def\skpr{\rangle}      
\def\const{\mbox{const.}}
\newcommand{\mcS}{\mathcal{S}}
\newcommand{\mcP}{\mathcal{P}}
\newcommand{\mcF}{\mathcal{F}}
\newcommand{\mcD}{\mathcal{D}}
\newcommand{\mcL}{\mathcal{L}}
\newcommand{\st}{\,:\,}
\newcommand{\be}{\begin{equation}}
\newcommand{\ee}{\end{equation}}
\newcommand{\inn}[2]{{\langle #1,#2 \rangle}}
\newcommand{\bB}{\mathcal{B}} 
\newcommand{\diffm}{\nabla _{\!\!\!-}}
\newcommand{\diffp}{\nabla _{\!\!+}}
\newtheorem{remarks}[theorem]{Remarks}
\begin{document}

\title*{Fractional Operators, Dirichlet Averages and Splines}
\author{Peter Massopust}
\institute{Peter Massopust
\at Technische Universit\"at M\"unchen\\ Zentrum Mathematik\\ Lehrstuhl Mathematical Modelling (M6)\\Boltzmannstr. 3\\ 85747 Garching b. M\"unchen, Germany\\
\email{massopust@ma.tum.de}
\and
and\at Helmholtz Zentrum M\"unchen\\Ingolst\"adter Landstr. 1\\85764 Neuherberg, Germany}
%
%
\maketitle
\textbf{\em Dedicated to Paul Leo Butzer on the occasion of his 85th birthday}
\vspace*{48pt}
\abstract*{Fractional differential and integral operators, Dirichlet averages, and splines of complex order are three seemingly distinct mathematical subject areas addressing different questions and employing different methodologies. It is the purpose of this paper to show that there are deep and interesting relationships between these three areas. First a brief introduction to fractional differential and integral operators defined on Lizorkin spaces is presented and some of their main properties exhibited. This particular approach has the advantage that several definitions of fractional derivatives and integrals coincide. We then introduce Dirichlet averages and extend their definition to an infinite-dimensional setting that is needed to exhibit the relationships to splines of complex order. Finally, we focus on splines of complex order and, in particular, on cardinal B-splines of complex order. The fundamental connections to fractional derivatives and integrals as well as Dirichlet averages are presented.}

\abstract{Fractional differential and integral operators, Dirichlet averages, and splines of complex order are three seemingly distinct mathematical subject areas addressing different questions and employing different methodologies. It is the purpose of this paper to show that there are deep and interesting relationships between these three areas. First a brief introduction to fractional differential and integral operators defined on Lizorkin spaces is presented and some of their main properties exhibited. This particular approach has the advantage that several definitions of fractional derivatives and integrals coincide. We then introduce Dirichlet averages and extend their definition to an infinite-dimensional setting that is needed to exhibit the relationships to splines of complex order. Finally, we focus on splines of complex order and, in particular, on cardinal B-splines of complex order. The fundamental connections to fractional derivatives and integrals as well as Dirichlet averages are presented.
}
\section{Introduction}\label{sec1}
The theory of fractional differential and integral operators is currently a very active area of research extending results that hold for the traditional integer order operators and equations to fractional and even complex orders. There are several different types of such fractional differential and integral operators and only the specific application singles out which type is the most useful for the setting at hand. However, there exist certain types of function spaces on which some of the different notions of fractional derivative and integral operator coincide. This is the approach taken here. We consider Lizorkin spaces as the domains for fractional derivative and integral operators and show that they form endomorphisms and satisfy all those properties that one wishes to adapt from the traditional integral order scenario. This approach will set the stage for the main purpose of this paper, namely the exhibition of deep and interesting connections to two other seemingly very different mathematical subject areas: Dirichlet averages and splines.

The second topic we present are Dirichlet averages or Dirichlet means. Dirichlet averages were introduced by Carlson in 1977 with the main objective to construct functions that follow "the principle that the fundamental functions should be free of unnecessary branch points and unnecessary transformations." (\cite[p. ix]{carlson}) This objective then led to the definition of the $R$-- and $S$--hypergeometric functions that generalize in a very natural way the traditional monomials and exponential functions. Later,  some relations to the classical theory of splines were made and it was found that B-splines can be represented by Dirichlet averages over $\delta$-distributions \cite{carlson91}. Another application of Dirichlet averages was presented in \cite{castell} where Dirichlet splines were shown to be fractional integrals of B-splines. Here a first indication surfaced that there might be a connection between fractional operators, Dirichlet averages and splines. 

Recently, a new class of splines was investigated in a series of publications \cite{forster06,FM07,FM08,FM09,FM09a,FM11,FM11a,FM11b,FMU,M09,M12,M12a,FM10}. These so-called complex B-splines extend the classical notion of cardinal polynomial B-spline to complex order. They are the third topic we consider. It was found that both fractional operators and Dirichlet averages are needed to understand the analytic and geometric structure of these splines. It is interesting to note that the analog of complex B-splines had already been introduced in an earlier paper by Zheludev \cite{zhel} in connection with fractional derivatives and convolution equations but their properties were not investigated further. Here, we define splines of complex order as the distributional solutions of a certain fractional differential equation and show that the complex B-spline is a solution. We then use this set-up to exhibit the specific relationships between the three topics mentioned above.

The structure of the paper is as follows. After introducing some preliminaries and setting notation, we present fractional differential and integral operators in Section \ref{sec3}. In Section \ref{sec4}, Dirichlet averages are introduced and their main properties discussed. Splines of complex order are defined in Section \ref{sec5} where we relate all three concepts.
\section{Notation and Preliminaries}\label{sec2}
In the sequel, we make use of multi-index terminology and employ the following notation. $\N$ denotes the set of positive integers and $\N_0 := \N\cup \{0\}$ the set of nonnegative integers. The ring of integers is denoted by $\Z$ and $\Z_0^\pm := \{a\in \Z\st \pm a \geq 0\}$. The following subsets of the real numbers $\R$, respectively, complex numbers $\C$ are frequently encountered: $\R^+_0 := \{r\in \R\st r \geq 0\}$, $\R^\pm := \{r\in \R\st \pm r > 0\}$, and $\C^+ := \{z\in \C\st \Re z > 0\}$. 

For multi-indices $m:= (m_1,\ldots, m_n)$ and $r := (r_1, \ldots, r_n)$, and for $p\in\R$, define $m^p$ and $m^r$ to be
\[
m^p := (m_1^{p}, \ldots, m_n^{p}) \quad\textrm{and}\quad m^r := (m_1^{r_1},\ldots, m_n^{r_n}),
\]
respectively.  The length of a multi-index is defined as $|m| := m_1 + \cdots m_n$. 

For $\alpha\in\R$ and multi-indices $k$ and $m$, we also define $k\prec \alpha$ iff $k_i\prec\alpha$ and $k \prec m$ iff $k_i \prec m_i$, $\forall i \in \{1, \ldots, n\}$, where $\prec$ is any one of the binary relation symbols $<, \leq, >, \geq, =$. For $\alpha\in\C$ and a multi-index $k$, we set $k \pm \alpha := (k_1 \pm \alpha, \ldots, k_n \pm \alpha)$ and $\alpha k := (\alpha\,k_1,\ldots, \alpha\,k_n)$.

For $a,b\in \R^n$ we write
\[
\int_a^b f(t)\, dt := \int_{a_1}^{b_1}\cdots\int_{a_n}^{b_n} f(t_1, \ldots, t_n)\, dt_1\ldots dt_n,
\]
where $-\infty\leq a < b\leq +\infty$.

Finally, for $z = (z_1, \ldots, z_n)\in (\C\setminus \Z_0^-)^n$, we set
\[
\Gamma (z) := \Gamma (z_1) \cdots\Gamma (z_n),
\]
where $\Gamma: \C\setminus \Z_0^-\to \C$ denotes the Euler gamma function. 

The Schwartz space $\mcS (\R^n, \C)$, $n\in \N$, is defined by
\[
\mcS (\R^n, \C) := \left\{ \varphi \in C^\infty(\R^n, \C) \,\Big|\, \forall k,m \in \mathbb{N}_0^n: \; \sup_{x\in\R^n} \|x^k D^m \varphi(x) \|  < \infty\; \right\}
\]
Here, $D^m := \displaystyle{\frac{\partial^{m_1}}{\partial x_1^{m_1}} \cdots \frac{\partial^{m_n}}{\partial x_n^{m_n}}} = \displaystyle{\frac{\partial^{m_1+\cdots + m_n}}{\partial x_1^{m_1}\cdots \partial x_n^{m_n}}}: C^\infty (\R^n, \C)\to C^\infty (\R^n, \C)$ denotes the ordinary partial derivative operator. The Schwartz space $\mcS(\R^n, \C)$ is a metrizable locally convex topological vector space whose topology is induced by the semi-norms
\[
\|f\|_M = \sup_{x \in \R^n} \max_{|k|,\, |m| < M} \|x^k D^m f(x)\|.
\]
For $1\leq p \leq \infty$, the following embeddings hold:
\[
\mcS (\R^n, \C) \subset L^p (\R^n,\C).
\]
Moreover, for $1\leq p < \infty$, the Schwartz space $ \mcS (\R^n, \C)$ is dense in $L^p(\R^n,\C)$ with respect to the $L^p$--norm. The space $\mcS (\R^n, \C)$ is closed under pointwise multiplication, i.e., if $f, g \in \mcS (\R^n, \C)$ then their product $fg\in \mcS (\R^n, \C)$. This is a consequence of the Leibniz rule for derivatives. 
\section{Fractional Operators}\label{sec3}
Fractional differential and integral operators have a long history going back to Newton and Leibnitz. These operators extend the classical concepts of differentiation and integration that involve integral orders to fractional orders. Later, more general operators of fractional order were considered. (See, for instance, \cite{butzerwestphal,hille,westphal} and references therein.) Currently, fractional derivative and integral operators are an active area of research extending from the theoretical foundations to applied sciences, such as physics and engineering. (A very short and incomplete list of recently published books is \cite{baleanu,hilfer,ortigueira}. The reader may find more references there.)

For the purposes of this paper, we introduce a type of fractional differential and integral operator that includes such well-known operators as Riemann-Liouville and Caputo by defining them on a particular space of functions. 


For our later purposes, we introduce the following subspace of the Schwartz space $\mcS := \mcS(\R^n, \C)$. This and related spaces play an important role in the theory of fractional differentiation and integration as they allow the identification of several fractional differential and integral operators \cite{samko}. 

The space $\Psi := \Psi (\R^n, \C)\subset \mcS$ is defined by
\begin{eqnarray*}
\Psi := \{\psi\in \mcS\st (D^m \psi)(x_1, \ldots, x_{k-1}, 0, x_k, \ldots, x_n) = 0, \,\forall\,m \in \N_0^n,\,k\in \{1, \ldots, n\} \}.
\end{eqnarray*}
An example of a function $\psi\in \Psi$ is given by
\[
\psi (x) = \textrm{exp} \left(-\|x\|^2 - \sum_{k=1}^n x_k^{-2}\right).
\]
The space $\Psi$ can be made into a topological vector space by means of a countable number of semi-norms $\{p_{k,m,p}\st k,m,p\in\N_0^n\}$ of the form
\[
p_{k,m,p} (x) := \sup\left\{ (1+x^2)^{m/2}\,\|x\|^{-p}\,\|(D^k \psi)(x)\|\st x\in \R^n\right\}.
\]
Considering the cases $\|x\| <1$ and $\|x\|\geq 1$, it follows that this topology coincides with the topology on $\mcS$. (See, for instance, \cite{samko}.)

The {\em Lizorkin space} $\Phi := \Phi (\R^n, \C)$ (cf. \cite{lizorkin}) is given by
\[
\Phi := \{\varphi\in \mcS\st \mcF(\varphi) \in \Psi\}.
\]
Here, we defined the Fourier--Plancherel transform $\mcF: \mcS\to \mcS$ by
\[
\mcF (f) (\omega) = :\widehat{f} (\omega):= \int_{\R^n} f(x)\, e^{i \inn{x}{\omega}} dx,
\]
where $\inn{\bullet}{\bullet}:\R^n\times\R^n\to \C$ denotes the canonical inner product in $\R^n$. The space $\Phi$ may be considered as a topological vector space when endowed with the topology of $\mcS$. Note that since $\Phi$ and $\Psi$ are isomorphic under the Fourier transform, i.e., $\Psi = \mcF\Phi$, the Lizorkin space $\Phi$ has a rich supply of functions. 

The next results summarize some properties of the Lizorkin space $\Phi$. For proofs and more details, we refer the interested reader to \cite{samko} or \cite{troyanov}.

\begin{proposition} \label{prop1}
\begin{enumerate}
\item The Lizorkin space $\Phi$ consists of those and only those functions $\varphi\in \mcS$, all of whose moments vanish along all coordinate axes. More precisely,
\[
\int_\R x^m\,\varphi (t_1, \ldots, t_{k-1}, x, t_{k+1}, \ldots, t_n) \, dx = 0, \;\forall k \in \{1, \ldots, n\},\,\forall m\in \N_0.
\]
In other words., if $P_k \varphi$ denotes the projection of $\varphi$ onto $\Phi(\boldsymbol{0}^{k-1}\times\R\times \boldsymbol{0}^{n-k})$, $k \in \{1, \ldots, n\}$, then 
\[
P_k \varphi\, \bot\, (\bullet)^m, \quad \forall m\in \N_0,
\]
where $\bot$ denotes orthogonality with respect to the $L^2$-inner product on $\R$.

\item $\Phi$ is a closed subset of $\mcS$ and an ideal under convolution, i.e., if $\varphi\in \Phi$ and $f\in \mcS$, then $\varphi*f \in \Phi$.

\item Let $\mcS^\prime := \mcS^\prime (\R^n, \C)$ be the topological dual of $\mcS$. Then the topological dual $\Phi^\prime$ of $\Phi$ is given by
\[
\Phi^\prime = \mcS^\prime/\mcP,
\]
where, $\mcP := \{f\in \mcS^\prime\st \supp \mcF (f) = \{0\}\}$ denotes the set of polynomials in $\mcS^\prime$.
\end{enumerate}
\end{proposition}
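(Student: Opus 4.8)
The three assertions are largely independent, and I would handle each by transferring to the Fourier side, using that $\mcF$ is a topological isomorphism of $\mcS$ with $\Phi = \mcF^{-1}\Psi$. For the moment characterization in (1), the plan is to exploit the intertwining $D^m\widehat\varphi = i^{|m|}\,\widehat{x^m\varphi}$ of differentiation with monomial multiplication. First I would fix a coordinate $k$ and pass to the partial Fourier transform $\mcF_k\varphi$ in that variable alone; then $\partial_\xi^m(\mcF_k\varphi)|_{\xi=0} = i^m\int_\R x^m\varphi\,dx$ as a function of the remaining spatial variables, so the stated moment conditions say precisely that $\mcF_k\varphi$ vanishes to infinite order on $\{\xi=0\}$. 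Next I would factor the full transform as $\widehat\varphi = \mcF_{\mathrm{rest}}(\mcF_k\varphi)$, with $\mcF_{\mathrm{rest}}$ the (injective) transform in the remaining variables, which commutes with $\partial_{\omega_k}$; this converts vanishing of the moments into vanishing of $\partial_{\omega_k}^m\widehat\varphi$ on the hyperplane $\{\omega_k=0\}$. Finally, since each such normal derivative vanishes on the \emph{whole} hyperplane, all tangential derivatives vanish there too, so the full family $(D^j\widehat\varphi)|_{\omega_k=0}=0$ defining $\Psi$ is recovered. The only real care needed is the bookkeeping between partial and full transforms.

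For (2), closedness follows by writing $\Psi$ as the intersection, over all $m$, all $k$, and all points of the hyperplane $\{x_k=0\}$, of the kernels of the evaluation functionals $\psi\mapsto(D^m\psi)(\text{point})$; each is continuous on $\mcS$, so $\Psi$ is closed, and $\Phi=\mcF^{-1}\Psi$ is closed because $\mcF^{-1}$ is a homeomorphism. For the convolution ideal property I would again use the Fourier transform: $\mcF(\varphi*f)=\widehat\varphi\,\widehat f$, which lies in $\mcS$ since $\mcS$ is closed under products. By the Leibniz rule every $D^m(\widehat\varphi\,\widehat f)$ is a finite sum of products $(D^j\widehat\varphi)(D^{m-j}\widehat f)$; on any hyperplane $\{\omega_k=0\}$ each factor $D^j\widehat\varphi$ vanishes because $\widehat\varphi\in\Psi$, so the whole sum vanishes and $\widehat\varphi\,\widehat f\in\Psi$, i.e., $\varphi*f\in\Phi$.

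For (3) I would start from the general duality principle that, for a closed subspace $\Phi$ of the locally convex space $\mcS$, restriction of functionals induces a topological isomorphism $\Phi^\prime\cong\mcS^\prime/\Phi^\perp$, where $\Phi^\perp=\{T\in\mcS^\prime\st\langle T,\varphi\rangle=0\ \forall\varphi\in\Phi\}$; this is the content of the Hahn--Banach extension theorem together with the closedness established in (2). It then remains to identify $\Phi^\perp$ with $\mcP$. The inclusion $\mcP\subseteq\Phi^\perp$ is immediate: by (1) the one-dimensional moments of $\varphi\in\Phi$ vanish identically in the remaining variables, so by Fubini every multivariate moment $\int_{\R^n}x^m\varphi\,dx$ vanishes, whence any polynomial pairs trivially with $\Phi$. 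The reverse inclusion $\Phi^\perp\subseteq\mcP$ is the crux and, I expect, \emph{the main obstacle}. I would transfer it to the Fourier side, where $T\in\Phi^\perp$ if and only if $\check T$ annihilates $\Psi$, so that the task becomes the structure statement that every distribution annihilating $\Psi$ has Fourier transform supported at the origin; once that support is pinned down, the classical characterization of distributions supported at a point as finite combinations of derivatives of the Dirac measure identifies them with the polynomials $\mcP$. Establishing the support condition is genuinely a regularity and structure result rather than a formal computation --- one must control a distribution from the sole knowledge that it annihilates every Schwartz function vanishing to infinite order on the coordinate hyperplanes --- and it is for this step that I would rely on the detailed arguments in \cite{samko} and \cite{troyanov}.
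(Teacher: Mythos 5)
Your treatments of parts (1) and (2) are correct and supply what the paper itself omits: the paper offers no proof of this proposition at all, only a pointer to \cite{samko} and \cite{troyanov}. Your arguments --- the partial Fourier transform plus the tangential-derivative observation for (1), and the continuity of the evaluation functionals $\psi\mapsto (D^m\psi)(\mathrm{point})$ together with the Leibniz rule for (2) --- are precisely the standard ones found in those references.

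The genuine gap is in part (3), and it is not merely that you defer the inclusion $\Phi^\perp\subseteq\mcP$ to the literature. The structure statement you reduce to --- that a distribution annihilating $\Psi$ must be supported at the origin (this is what you actually need; your phrasing ``has Fourier transform supported at the origin'' is a slip, since it is the Fourier transform of the annihilating distribution, i.e.\ $T$ itself, that is supposed to come out a polynomial) --- is \emph{false} for $n\geq 2$ under the definition of $\Psi$ used in this paper. Concretely, take $S:=\delta(\omega_1)\otimes h(\omega_2,\ldots,\omega_n)$ with $h\in\mcS(\R^{n-1},\C)$ arbitrary. Every $\psi\in\Psi$ vanishes identically on the hyperplane $\{\omega_1=0\}$, so $\skpl S,\psi\skpr=0$, yet $\supp S=\{0\}\times\supp h\neq\{0\}$. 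Equivalently, $T:=\mcF^{-1}S$ is (up to a constant) a function independent of $x_1$ and Schwartz in the remaining variables; it annihilates $\Phi$ --- indeed your own proof of (1) shows that $\int_\R \varphi(x_1,x_2,\ldots,x_n)\,dx_1=0$ for every fixed $(x_2,\ldots,x_n)$ --- but $T\notin\mcP$ since $\supp\mcF(T)\neq\{0\}$. Hence $\Phi^\perp\supsetneq\mcP$, and your (correct) Hahn--Banach reduction then gives $\Phi^\prime=\mcS^\prime/\Phi^\perp\neq\mcS^\prime/\mcP$.

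What this exposes is a mismatch inside the proposition itself, which a blind attempt inherits rather than detects: parts (1)--(2) concern the ``hyperplane'' Lizorkin space ($\widehat{\varphi}$ vanishing to infinite order on all coordinate hyperplanes), whose annihilator consists of all $T\in\mcS^\prime$ with $\supp\mcF(T)$ contained in the union of the coordinate hyperplanes, whereas the duality $\Phi^\prime=\mcS^\prime/\mcP$ is the statement for the \emph{other} Lizorkin space ($\widehat{\varphi}$ vanishing to infinite order at the origin only, equivalently $\int_{\R^n}x^m\varphi\,dx=0$ for all multi-indices $m$), which is the version treated in \cite{troyanov} and in the Riesz-potential part of \cite{samko}. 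The two spaces coincide only for $n=1$, where your argument does go through. So the step you flagged as ``the main obstacle'' cannot be closed by consulting the references: for $n\geq 2$ it is not true, and a correct statement must either restrict to $n=1$ or replace $\mcP$ by the larger annihilator just described.
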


\begin{proposition}\label{prop2}
Given $\psi\in \mcS$, the following statements are equivalent.
\begin{enumerate}
\item	$\psi\in \Psi$;
\item	$\forall\mu\in \N^n$, $\forall t\in \R^+:$ $D^\mu\psi(\xi) \in o(\|\xi\|^t)$ as $\|\xi\|\to 0$;
\item	$\forall m\in \N^n:$ $\|\xi\|^{-2m}\,\psi \in \mcS$.
\end{enumerate}
\end{proposition}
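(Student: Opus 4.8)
The plan is to prove the three equivalences cyclically as $(1)\Rightarrow(2)\Rightarrow(3)\Rightarrow(1)$. Since each of the three conditions concerns the behaviour of $\psi$ transverse to the coordinate hyperplanes $\{\xi_k=0\}$ and can be tested one coordinate at a time, I would first reduce to the one-dimensional case, fixing all but one variable $\xi_k$ and regarding the remaining ones as parameters. The reduction is legitimate because $\psi\in\Psi$ is equivalent to requiring, for each $k$ and each fixed value of the other variables, that all $\xi_k$-derivatives of $\psi$ vanish on $\{\xi_k=0\}$; tangential derivatives then vanish automatically, since they differentiate a function identically zero on that hyperplane. In the reduced setting the Euclidean norm and the single-variable power coincide, so all three conditions become unambiguous, with (1) becoming flatness of $\psi$ at the origin, (2) becoming $\psi^{(\mu)}(\xi)\in o(|\xi|^t)$ for all orders $\mu$ and all $t\in\R^+$, and (3) becoming $\xi^{-2m}\psi\in\mcS(\R)$ for all $m$. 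Throughout I may use that $\psi\in\mcS$, so smoothness and rapid decay are free and only the local behaviour near the origin must be analysed.

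For $(1)\Rightarrow(2)$ I would invoke Taylor's theorem with remainder. If $\psi$ is flat at $0$, then so is every derivative $\psi^{(\mu)}$, and expanding $\psi^{(\mu)}$ to order $N$ about $0$ makes every Taylor coefficient a higher derivative of $\psi$ at $0$, hence zero; thus $\psi^{(\mu)}(\xi)$ equals its remainder term, which is $O(|\xi|^{N+1})$. As $N$ is arbitrary, this yields $\psi^{(\mu)}(\xi)\in o(|\xi|^t)$ for every $t\in\R^+$, which is exactly (2), including the order $\mu=0$.

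For $(2)\Rightarrow(3)$ I would show that $\xi^{-2m}\psi$ extends to a Schwartz function. For $|\xi|\ge 1$ the factor $\xi^{-2m}$ is smooth and bounded, so $\xi^{-2m}\psi$ inherits rapid decay and smooth boundedness from $\psi\in\mcS$ directly. The delicate region is a neighbourhood of $0$, where I would combine the infinite-order flatness from (2) with the quotient form of the Leibniz rule: every derivative of $\xi^{-2m}\psi$ is a finite sum of terms $\psi^{(\beta)}\cdot(\xi^{-2m})^{(\gamma)}$, in which $(\xi^{-2m})^{(\gamma)}$ grows no faster than $|\xi|^{-2m-\gamma}$ while $\psi^{(\beta)}\in o(|\xi|^t)$ for \emph{every} $t$; choosing $t$ large then makes each such term, and hence each Schwartz seminorm of $\xi^{-2m}\psi$, stay bounded near $0$. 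This is the step I expect to be the main obstacle, since it requires the quantitative flatness of (2) rather than mere vanishing, together with careful bookkeeping of the quotient-rule terms so as to control all seminorms simultaneously and uniformly in the parameters inherited from the remaining coordinates.

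Finally, for $(3)\Rightarrow(1)$ I would write $\psi=\xi^{2m}g$ with $g:=\xi^{-2m}\psi\in\mcS$. By the Leibniz rule every derivative $\psi^{(\mu)}$ of order $\mu<2m$ retains a positive power of $\xi$ in each summand and therefore vanishes at $\xi=0$. Since (3) is assumed for every $m$, the exponent $2m$ is arbitrarily large, so $\psi^{(\mu)}(0)=0$ for all $\mu$, which is precisely flatness at the origin, i.e.\ (1) in the reduced setting. Undoing the reduction successively over $k=1,\dots,n$ then gives the equivalence for general $n$, closing the cycle.
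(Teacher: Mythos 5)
Your one-dimensional reduction is exactly where the argument breaks, and the break is fatal rather than cosmetic. Condition (1) constrains $\psi$ on \emph{all coordinate hyperplanes} $\{\xi_k=0\}$, whereas conditions (2) and (3), read literally as you read them, constrain $\psi$ only \emph{near the origin}, where $\|\xi\|\to 0$. If you freeze the other coordinates at nonzero values and let $\xi_k\to 0$, the point approaches the hyperplane but $\|\xi\|$ stays bounded away from zero, so (2) and (3) say nothing at all there; in particular your claim that ``in the reduced setting the Euclidean norm and the single-variable power coincide'' is false, because the frozen coordinates still contribute to $\|\xi\|$. Under the literal reading the implication (2)$\Rightarrow$(1) is in fact \emph{false} for $n\geq 2$: take
\[
\psi(\xi) \;=\; \exp\!\left(-\tfrac{1}{\xi_1^2}\right)\exp\!\left(-\|\xi\|^2\right),
\]
extended by $0$ across $\{\xi_1=0\}$. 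This is a Schwartz function, and every derivative of it is a finite sum of terms carrying a factor $Q(1/\xi_1)\,e^{-1/\xi_1^2}$ ($Q$ a polynomial), which is $O(|\xi_1|^{t+1})\subseteq o(\|\xi\|^{t})$ for every $t>0$; so (2) holds (and so does (3) if $\|\xi\|^{-2m}$ means a power of the norm). Yet $\psi\notin\Psi$, since $\psi$ does not vanish on the hyperplane $\{\xi_2=0\}$. Your step (3)$\Rightarrow$(1) exhibits the same defect: writing $\psi=\|\xi\|^{2m}g$ with $g\in\mcS$ only forces derivatives of $\psi$ to vanish \emph{at the origin}, not on the hyperplanes, so the cycle cannot close.

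The proposition is only true under the coordinate-wise reading, which is what the source the paper cites (Samko--Kilbas--Marichev) actually works with: in (2) one requires $D^\mu\psi(\xi)=o(|\xi_k|^{t})$ as $\xi_k\to 0$ for each $k$ (uniformly in the remaining variables), and in (3) one reads $m\in\N^n$ as a genuine multi-index and requires $\xi^{-2m}\psi=\prod_{k=1}^{n}\xi_k^{-2m_k}\,\psi\in\mcS$; only division by each separate coordinate forces flatness on each hyperplane. Under that interpretation your three one-dimensional implications are essentially the right local argument --- Taylor for (1)$\Rightarrow$(2), Leibniz bookkeeping for (2)$\Rightarrow$(3), Leibniz again for (3)$\Rightarrow$(1) --- carried out in $\xi_k$ with the other variables as parameters, provided every estimate is kept uniform in those parameters, since Schwartz seminorms are global suprema. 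Even then, one more step is missing in (2)$\Rightarrow$(3): bounding the Leibniz terms of $\xi_k^{-2m_k}\psi$ on the punctured neighbourhood of the hyperplane does not by itself show that the extension by zero is smooth across $\{\xi_k=0\}$; you need the standard lemma that a function which is $C^\infty$ off the hyperplane, continuous across it, and all of whose derivatives tend to $0$ at the hyperplane, is $C^\infty$ there with vanishing derivatives. So the local analysis is salvageable, but the reduction you declare ``legitimate'' is precisely the point at issue: as written, your proof establishes the coordinate-wise proposition while asserting, incorrectly, that it coincides with the one stated.
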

Regarding the spaces $\mcP$ and $\Phi$, we remark that:
\begin{enumerate}
\item $f\in \mcP$ if and only if $\Delta^m f = 0$, for some $m\in \N^n$, where $\Delta:\mcS\to \mcS$, 
$$
\Delta := -\sum_{j=1}^n \frac{\partial^2}{\partial x_j^2},
$$
denotes the Laplace operator on $\mcS$. Using the Laplace operator, the Lizorkin space $\Phi$ may also be defined as (cf. \cite{troyanov})
\[
\Phi := \bigcap_{m\in \N} \Delta^m (\mcS), 
\]
\item $\Phi$ is dense in $L^p (\R^n)$ for $2\leq p < \infty$. \cite{samko2}. 
\end{enumerate}
\noindent
For $t\in \R^n$ and $z\in \C^n$, we define
\[
t^z_+ := 
\left\{ \begin{array}{ cl}
t^z = e^{z\log t} = t^{\Re z}\,e^{i \Im z \log t},      &\qquad t \geq 0;   \\
    0,  &   \qquad t < 0,
\end{array}
\right.
\]
where $t^z = (t_1^{z_1}, \ldots, t_n^{z_n}) = (e^{z_1\log t_1}, \ldots, e^{z_n\log t_n})$.
\begin{definition}
Let $z\in (\C^+)^n$ be a multi-index. The fractional integral $\mcD^{-z} :\Phi \to \Phi$ is defined by
\[
(\mcD^{-z} \varphi)(x) := \frac{1}{\Gamma (z)}\,\int_{\R^n} t^{z - 1}_+ \varphi(x + t) dt.
\]
\end{definition}
\begin{definition}
Let $n\in \N$. For all $i = 1, \ldots, n$, let $m_i := \lceil \Re z_i \rceil$, where $\lceil\bullet\rceil : \R\to\Z$, $r \mapsto \min\{k\in \Z\st r\leq k\}$,  denotes the ceiling function. Set $\nu_i := m_i - z_i$ and $m := \sum m_i$. The fractional derivative $\mcD^{z} :\Phi \to \Phi$ is given by
\begin{eqnarray*}
(\mcD^{z} \varphi)(x) &:= &\displaystyle{\frac{1}{\Gamma (\nu)}\,\frac{\partial^m}{\partial x_1^{m_1}\cdots \partial x_n^{m_n}}\,\int_{\R^n} t_+^{\nu -1} \varphi(x+ t) dt},\\
& = & \displaystyle{\frac{1}{\Gamma (\nu)}\,\int_{\R^n} t_+^{\nu -1} (D^m \varphi)(x + t) dt = D^m \mcD^{-\nu} \varphi.}
\end{eqnarray*} 
\end{definition}
\noindent
For $z\in \N^n$, these definitions reduce via the Cauchy formula to the classical derivative and integral.

\begin{remark}\label{Rem2.3}
It can be shown (cf. \cite{samko}) that $\mcD^{\pm z}$ leaves the Lizorkin space $\Phi$ invariant. Hence, the dual space $\Phi'$ is invariant under $\mcD^{\pm z}$. Since $\Phi\subset L^1 (\R^n, \C)$, we have the canonical inclusion $\Phi \subset \Phi'$ as topological vector spaces.
\end{remark}
\begin{remark}
\item The interchange of derivative and integral in the definition of $\mcD_\pm^z$ is justified since $f\in \Phi\subset \mcS$, hence vanishes at $\pm\infty$.
\end{remark}
\begin{remark}
\item Introducing a kernel function $K_z:\R^n\to\C$ by
\[
K_z(x) := \frac{x_+^{z-1}}{\Gamma (z)},
\]
and using Remark 2, we can write the definitions of the fractional integral and derivative in the following more succinct form:
\[
\mcD^z f := (D^m f) * K_{m-z} = D^m (f*K_{m-z}), \qquad m = \lceil \Re z \rceil,
\]
and
\[
\mcD^{-z} f := f*K_z.
\]
\end{remark}

We note that if function spaces other than $\Psi$ are considered, 
\be\label{notequal}
(D^m f) * K_{m-z} \neq D^m (f*K_{m-z}),
\ee
in general, and each side defines a different fractional derivative operator. The left-hand side of (\ref{notequal}) is called the {\em Caputo fractional derivative} and the right-hand side the {\em Riemann-Liouville} or {\em Weyl fractional derivative}. The interested reader may consult \cite{kilbas} or \cite{podlubny} for more details.

Next, we summarize some properties of the fractional derivatives and integrals introduced above.

\begin{theorem}\label{thm1}
The fractional derivatives $\mcD^z$ and fractional integrals $\mcD^{-z}$ have the following properties.
\begin{enumerate}
\item Semi-group property: For all $z_1, z_2\in (\C^+)^n$ and all $\varphi\in \Phi$ one has: 
\be\label{important}
\mcD^{\pm (z_1 + z_2)} \varphi =(\mcD^{\pm z_1}\varphi) (\mcD^{\pm z_2}\varphi) = (\mcD^{\pm z_2}\varphi) (\mcD^{\pm z_1}\varphi).
\ee
\item For all $z\in (\C^+)^n$ and all $\varphi\in \Phi$ one has: 
\[
\mcD^z \mcD^{-z} \varphi = \mcD^{-z} \mcD^{z} \varphi = \varphi.
\]
\end{enumerate}
\end{theorem}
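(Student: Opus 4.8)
The plan is to pass to the Fourier side, where both families of operators become multiplication by explicit power-function symbols; the semi-group and inversion identities then reduce to the elementary multiplicativity $\zeta^{a}\zeta^{b}=\zeta^{a+b}$ of powers, and the commutativity in part (1) is automatic because scalar multiplication commutes. (I read the right-hand sides of (\ref{important}) as operator compositions $\mcD^{\pm z_1}\mcD^{\pm z_2}\varphi$, which is the content of a semi-group property.)

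First I would compute the Fourier transform of the kernel $K_z(x)=x_+^{z-1}/\Gamma(z)$. Using the one-dimensional integral $\int_0^\infty t^{z-1}e^{i\omega t}\,dt=\Gamma(z)\,(-i\omega)^{-z}$, valid for $\Re z>0$ and extended to real $\omega$ as a boundary value, and taking products over the coordinates, one obtains $\widehat{K_z}(\omega)=(-i\omega)^{-z}$ in multi-index notation. From the convolution representation $\mcD^{-z}\varphi=\varphi*K_z$ and the convolution theorem (with the paper's convention $\widehat f(\omega)=\int f(x)e^{i\langle x,\omega\rangle}\,dx$) this gives $\widehat{\mcD^{-z}\varphi}(\omega)=(-i\omega)^{-z}\,\widehat\varphi(\omega)$. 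For the derivative, writing $\mcD^{z}\varphi=D^m(\varphi*K_{m-z})$ with $m=\lceil\Re z\rceil$ and using $\widehat{D^m g}=(-i\omega)^m\,\widehat g$, the factors $(-i\omega)^m$ and $(-i\omega)^{-(m-z)}$ combine and the integer $m$ cancels, leaving $\widehat{\mcD^{z}\varphi}(\omega)=(-i\omega)^{z}\,\widehat\varphi(\omega)$. Thus both operators act as multiplication by $(-i\omega)^{\pm z}$ on the Fourier side.

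With the symbols in hand, both assertions follow formally. For part (1), the identity $(-i\omega)^{\pm z_1}(-i\omega)^{\pm z_2}=(-i\omega)^{\pm(z_1+z_2)}$ together with injectivity of $\mcF$ yields the semi-group law, and commutativity is immediate. For part (2), taking $z_1=z$ and $z_2=-z$ gives $(-i\omega)^{z}(-i\omega)^{-z}=1$, hence $\mcD^{z}\mcD^{-z}\varphi=\mcD^{-z}\mcD^{z}\varphi=\varphi$.

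The main obstacle is turning these formal manipulations into rigorous ones, and this is exactly where the Lizorkin space is indispensable. The symbol $(-i\omega)^{\pm z}$ is both singular and multi-valued along the coordinate hyperplanes $\omega_j=0$, so a priori the products $(-i\omega)^{\pm z}\widehat\varphi$ need not be smooth and the branch identity $\zeta^{a}\zeta^{b}=\zeta^{a+b}$ need not hold globally. However, $\varphi\in\Phi$ means $\widehat\varphi\in\Psi$, and by Proposition \ref{prop2} every derivative of $\widehat\varphi$ vanishes faster than any power of $\|\omega\|$ as $\omega$ approaches any coordinate hyperplane. I would use this to show that $(-i\omega)^{\pm z}\widehat\varphi$ again lies in $\Psi$ (so that $\mcD^{\pm z}$ indeed maps $\Phi$ into $\Phi$, consistently with Remark \ref{Rem2.3}) and that a single consistent branch of the power can be fixed off the hyperplanes, since the only locus where ambiguity arises is precisely where $\widehat\varphi$ and all its derivatives vanish. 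Once this well-definedness and branch consistency are established, the algebraic identities above transfer verbatim and the theorem follows.
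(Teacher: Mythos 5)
Your proposal is correct in substance, but it cannot be ``the same approach as the paper'' for a simple reason: the paper contains no proof of Theorem~\ref{thm1} at all --- its proof is a pointer to \cite{anastassiou,podlubny,samko} plus the remark that the $\R^n$ generalization ``lies at hand.'' What you wrote is, in essence, the argument that lives in those references (notably \cite{samko}, where fractional integro-differentiation on Lizorkin spaces is handled by exactly this Fourier multiplier calculus), so your proposal supplies what the paper omits rather than duplicating it. The skeleton is sound: you correctly read the juxtaposition in (\ref{important}) as operator composition, the symbols $(-i\omega)^{\pm z}$ are the right ones for the convolution form $\mcD^{-z}f=f*K_{z}$, $\mcD^{z}f=D^{m}(f*K_{m-z})$ of Remark~3, the integer factor $(-i\omega)^{m}$ cancels against $(-i\omega)^{-(m-z)}$ independently of branch, and injectivity of $\mcF$ on $\mcS$ finishes both parts (part~2 does not need part~1; it is the same symbol identity with exponents $z$ and $-z$). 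Three points to tighten. First, the real mathematical content is precisely the step you leave as a plan: that multiplication by $(-i\omega)^{\pm z}$ maps $\Psi$ into $\Psi$ (equivalently $\mcD^{\pm z}\Phi\subseteq\Phi$), together with the fact that $\widehat{K_{z}}$ is a well-defined tempered distribution equal to $(-i\omega)^{-z}$ --- for $\Re z\geq 1$ the latter is not locally integrable at the hyperplanes and needs a regularization, which is harmless only because it is paired against elements of $\Psi$ vanishing there to all orders; this is where Proposition~\ref{prop2} must be used quantitatively, and it is also the one point the paper's Remark~\ref{Rem2.3} itself cedes to \cite{samko}, so either prove it or cite it explicitly. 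Second, the branch issue is milder than you suggest: off the hyperplanes the base $-i\omega$ is purely imaginary and nonzero, so the principal branch is available, and for a \emph{fixed} branch of $\log(-i\omega)$ the identity $e^{z_{1}\log(-i\omega)}e^{z_{2}\log(-i\omega)}=e^{(z_{1}+z_{2})\log(-i\omega)}$ is trivial; no cross-branch consistency argument is required. Third, a cosmetic discrepancy internal to the paper: its displayed definition of $\mcD^{-z}$ uses $\varphi(x+t)$ (a correlation), which under the stated convention $\widehat{f}(\omega)=\int f(x)e^{i\inn{x}{\omega}}dx$ yields the symbol $(i\omega)^{-z}$ rather than $(-i\omega)^{-z}$; you followed Remark~3's convolution form, and either choice leaves your argument unchanged.
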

\begin{proof}
For proofs of these properties, we refer the reader to \cite{anastassiou,podlubny,samko}. The generalization of some of these results to $\R^n$ lies at hand.\qed
\end{proof}

In the sequel, we also consider generalized functions $f\in \Phi'$ and $f\in \Psi'$ and, therefore, need to extend the concept of fractional derivative and integral to this more general setting.

To this end, let $x \in \R^n$ and $z\in (\C^+)^n$ with $\Re z > -1$. If we define
\[
x^z := \left\{ \begin{array}{ cl}
    x^z,      &\qquad x \geq 0;   \\
    e^{i\pi z} |x|^z, &\qquad x < 0,
\end{array}
\right.
\]
then we may regard the locally integrable function $x\mapsto x^z$, $\Re z > -1$, as an element of $\Phi^\prime$ by setting
\[
\skpl (\bullet)^z_, \varphi\skpr = \int_\R^n t^z \varphi (t) dt, \quad \forall\varphi\in \Phi.
\]
In passing, we note that  for all $\varphi\in \Psi$ the function $z\mapsto \skpl K_z, \varphi\skpr$ is holomorphic on $(\C\setminus \N_0)^n$.

\begin{remark}
The function $(\bullet)^{z}$ may also be defined for general $z\in \C$ via Hadamard's partie finie and represents then a pseudo-function. For more details, we refer the interested reader to \cite{dantray,gelfand}, or \cite{zemanian}.
\end{remark}

Employing the results in \cite{samko}, we mention that for $f,g\in \Psi^\prime$ the convolution exists on $\Psi^\prime$ and is defined in the usual way by 
\be\label{eq7}
\skpl f*g, \varphi\skpr :=  \skpl (f\times g) (x,y), \varphi (x+y)\skpr = \skpl f(x), \skpl g(y), \varphi (x+y)\skpr\skpr, \quad \varphi\in \Psi.
\ee
The pair $(\Psi^\prime, *)$ is a convolution algebra with the Dirac delta distribution $\delta$ as its unit element.

\begin{definition}
Let  $z\in \C^+$, let $\varphi\in \Psi$ be a test function and $f\in \Psi^\prime$. Then the fractional derivative operator $\mcD^{z}$ on $\Psi^\prime$ is defined by
\[
\skpl\mcD^{z} f, \varphi\skpr := \skpl (D^m f)*K_{m-z},\varphi\skpr,\quad m = \lceil\Re z\rceil,
\]
and the fractional integral operator $\mcD^{-z}$ by
\[
\skpl\mcD^{-z} f, \varphi\skpr := \skpl f*K_z,\varphi\skpr.
\]
\end{definition}

The semi-group properties (\ref{important}) also hold for $f\in \Psi^\prime$. (For a direct proof, see \cite{gelfand} or \cite{podlubny}.)

For later purposes, we need the restriction of $\Psi$ to $[0,\infty)^n$:
\[
\Psi_+ := \{f\in \Psi \st \supp f \subseteq [0,\infty)^n\}.
\]
Note that all the properties and results mentioned above also apply to the elements of $\Psi_+$ and $\Psi_+^\prime$.

\begin{example}
Suppose that $n := 1$. Let $z\in \C^+$. Then it can be shown (cf. \cite{gelfand,podlubny, samko}) that the $z$-th derivative of a shifted truncated power function is given 
\be\label{dertruncpow}
\mcD^z \left[\frac{(x - k)_+^{z-1}}{\Gamma (z)}\right] = \delta (x - k), \quad \R\ni k < x \in [0, \infty).
\ee
Thus, employing the semi-group properties (\ref{important}) of $\mcD^z$ or by direct computation using definition (\ref{eq7}) of convolution, one obtains
\be\label{intdelta}
\mcD^{-z} \delta (\bullet - k) = \frac{(\bullet - k)^{z-1}_+}{\Gamma (z)}, \qquad k\in \R.
\ee
\end{example}
Equations (\ref{dertruncpow}) and (\ref{intdelta}) will be reconsidered in Section 4 where they relate to splines to complex order.
\section{Dirichlet Averages}\label{sec4}
The concept of Dirichlet average or Dirichlet mean over a finite-dimensional simplex $\triangle^n$, $n\in \N$, was first discussed in \cite{carlson} and related to univariate and multivariate B-splines in \cite{carlson91}. Dirichlet averages generalize among others, classical functions such as $x^k$, $k\in \N_0$, $x^t$, $t > 0$, and $e^x$, and have produced deep and interesting connections to special functions.

In this section, we extend the notion of Dirichlet mean to an infinite-dimensional simplex $\triangle^\infty$ and show that under mild conditions the results important for our interests do also hold on $\triangle^\infty$. In particular, we show that using a geometric interpretation, the type of fractional derivative and integral introduced in the previous section can be applied to Dirichlet averages.

To this end, let $n\in \N$ and denote by $\triangle^n$ the standard $n$-simplex in $\R^{n+1}$:
\[
\triangle^n := \Bigg\lbrace u:=  (u_0,\ldots,u_n)\in\mathbb{R}^{n+1} \,\Bigg\vert\;  u_j \geq 0;\; j = 0,1,\ldots, n;\, \sum_{j=0}^n u_j = 1\Bigg\rbrace.
\]

The extension of $\triangle^n$ to infinite dimensions is done by using projective limits. The resulting infinite-dimensional standard simplex is given by
\[
\triangle^\infty := \lim_{\longleftarrow} \triangle^n = \left\{ u := (u_j)_{j\in (\R_0^+)^{\N_0}}\,\Bigg\vert\; \sum_{j=0}^\infty\, u_j =1\right\},
\]
and endowed with the topology of pointwise convergence, i.e., the weak$*$-topology. We denote by $\mu_b = \displaystyle{\lim_{\longleftarrow}} \mu_b^n$ the projective limit of the \textit{Dirichlet measures $\mu_b^n$} defined on the $n$-dimensional standard simplex $\triangle^n$ through the density functions
\be\label{dirmeas}
\frac{\Gamma(b_0)\cdots \Gamma(b_n)}{\Gamma(b_0 + \cdots + b_n)}\,u_0^{b_0-1}\,u_1^{b_1-1}\cdots u_n^{b_n-1},
\ee
where $b :=(b_0, \ldots, b_n) \in \C^{n+1}$ with $\Re b_j > 0$, $j = 0, 1, \ldots, n$. Note that by the Kolmogorov Extension Theorem this measure $\mu_b$ exists. (See, for instance, \cite{shiryayev}.)

\begin{definition}
Let $\Omega$ be a nonempty convex open subset of $\mathbb{C}^s$, $s\in \N$, and let $\tau\in\Omega^n$. The Dirichlet average of a measurable function $f:\Omega\to\mathbb{C}$ is defined as the integral
\be\label{dirichletaverage}
F(b;\tau) := \int_{\triangle^n} f(\tau\cdot u) d\mu^n_b (u),
\ee
where $u\cdot \tau := \displaystyle{\sum_{i=0}^n}\, u_i \tau^i \in \C^s$.
\end{definition}

We note that it is customary to denote the Dirichlet average of a function $f$ by the corresponding upper-case letter, $F$. It can be shown that the Dirichlet average of a derivative equals the derivative of the Dirichlet average. For more details regarding the properties of Dirichlet averages and their connection to the theory of special functions, we refer the interested reader to \cite{carlson}, where one may also find the proof of the next result.

\begin{proposition}
Suppose that $f:\Omega \to \C$ is holomorph. Then, for fixed $\tau\in\Omega^{n+1}$, the Dirichlet average $F(\,\bullet\,, \tau)$ is a holomorphic function on $(\C^+)^{n+1}$.
\end{proposition}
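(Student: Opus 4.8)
The plan is to show holomorphy of $F(b;\tau)=\int_{\triangle^n} f(\tau\cdot u)\,d\mu_b^n(u)$ in the variable $b=(b_0,\dots,b_n)\in(\C^+)^{n+1}$ by writing out the integral explicitly against the Dirichlet density \eqref{dirmeas} and then applying a standard theorem on holomorphy of integrals depending on a complex parameter. Concretely, for fixed $\tau\in\Omega^{n+1}$ we have
\[
F(b;\tau)=\frac{\Gamma(b_0+\cdots+b_n)}{\Gamma(b_0)\cdots\Gamma(b_n)}\int_{\triangle^n} f\Bigl(\sum_{i=0}^n u_i\tau^i\Bigr)\,u_0^{b_0-1}\cdots u_n^{b_n-1}\,du,
\]
so the $b$-dependence sits entirely in the prefactor of reciprocal Gamma functions and in the powers $u_j^{b_j-1}=e^{(b_j-1)\log u_j}$. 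The prefactor is holomorphic on $(\C^+)^{n+1}$ because $\Gamma$ is holomorphic and nonvanishing there, so the crux is the holomorphy of the integral term $G(b;\tau):=\int_{\triangle^n} f(\tau\cdot u)\prod_{j=0}^n u_j^{b_j-1}\,du$.

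First I would fix an arbitrary compact set $K\subset(\C^+)^{n+1}$ and verify the hypotheses of the Lebesgue/Morera criterion for parameter-dependent integrals: (i) for $\mu$-a.e.\ $u\in\triangle^n$ the integrand $b\mapsto f(\tau\cdot u)\prod_j u_j^{b_j-1}$ is holomorphic in each variable $b_j$ separately (indeed it is entire in $b$, since $u_j^{b_j-1}=e^{(b_j-1)\log u_j}$ is entire for each fixed $u_j\in(0,1)$); and (ii) a uniform integrable majorant exists on $K$. For (ii), since $\tau\cdot u$ ranges over the convex hull of $\{\tau^0,\dots,\tau^n\}$, which is a compact subset of the open set $\Omega$ on which $f$ is holomorphic hence continuous, $|f(\tau\cdot u)|$ is bounded by some constant $M_\tau$. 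Writing $\beta_j:=\min_{b\in K}\Re b_j>0$, one has $|u_j^{b_j-1}|=u_j^{\Re b_j-1}\le u_j^{\beta_j-1}$ for $u_j\in(0,1]$, so the integrand is dominated on $K$ by $M_\tau\prod_j u_j^{\beta_j-1}$, which is $\mu$-integrable over $\triangle^n$ precisely because each $\beta_j>0$ (the Dirichlet integral $\int_{\triangle^n}\prod_j u_j^{\beta_j-1}\,du$ converges to a ratio of Gamma functions). By the theorem on differentiation under the integral sign in the complex setting, $G(\,\bullet\,;\tau)$ is then holomorphic in each $b_j$ on the interior, and since $K$ was arbitrary it is separately holomorphic on all of $(\C^+)^{n+1}$; Hartogs' theorem (or repeated one-variable Morera) upgrades separate holomorphy to joint holomorphy in several complex variables.

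The main obstacle is the integrability of the majorant near the boundary faces of the simplex, where individual coordinates $u_j\to 0$ and the factors $u_j^{\beta_j-1}$ blow up when $\Re b_j<1$. This is exactly where the condition $\Re b_j>0$ (equivalently $b\in(\C^+)^{n+1}$) is indispensable: the singularity $u_j^{\beta_j-1}$ is integrable against Lebesgue measure on the simplex iff $\beta_j>0$, so the domain $(\C^+)^{n+1}$ in the statement is the natural and sharp domain of holomorphy. I would therefore take care to isolate this estimate and to note that the boundedness of $|f(\tau\cdot u)|$ relies on $\triangle^n$ being mapped by $u\mapsto\tau\cdot u$ into the compact convex hull inside $\Omega$, so that no singularity of $f$ is approached. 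Everything else is routine: the reciprocal-Gamma prefactor contributes holomorphy but no further constraint, and the product of a holomorphic prefactor with the holomorphic integral $G$ is holomorphic.
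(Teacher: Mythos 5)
Your argument is correct, but there is nothing in the paper to compare it against: the paper offers no proof of this proposition at all, stating instead that the proof may be found in Carlson's book \cite{carlson}. Your route --- splitting off the Gamma prefactor, bounding $|f(\tau\cdot u)|$ by a constant $M_\tau$ because $u\mapsto\tau\cdot u$ maps $\triangle^n$ onto the compact convex hull of $\{\tau^0,\dots,\tau^n\}\subset\Omega$, dominating $|u_j^{b_j-1}|=u_j^{\Re b_j-1}\leq u_j^{\beta_j-1}$ uniformly on compacta $K\subset(\C^+)^{n+1}$, and then applying Morera/dominated convergence in each variable followed by Hartogs --- is the standard proof and is essentially what one finds in Carlson, so your proposal supplies the self-contained argument the paper omits. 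Three minor remarks. First, your normalizing factor $\Gamma(b_0+\cdots+b_n)/(\Gamma(b_0)\cdots\Gamma(b_n))$ is the standard Dirichlet normalization, whereas the paper's displayed density (\ref{dirmeas}) shows the reciprocal ratio; this is immaterial here, since both ratios are holomorphic and nonvanishing on $(\C^+)^{n+1}$, but you should not claim your formula is literally the paper's measure. Second, you can avoid invoking the full strength of Hartogs: your majorant already gives local uniform boundedness and (via dominated convergence) joint continuity, so the elementary Osgood lemma upgrades separate to joint holomorphy. Third, your parenthetical assertion that $(\C^+)^{n+1}$ is the \emph{sharp} domain of holomorphy conflates convergence of the integral representation with analytic continuability of $F$ itself; the integral indeed converges exactly when each $\Re b_j>0$, but Carlson's averages typically continue analytically in $b$ beyond that region. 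This overreach is harmless, as it is not used in the proof of the stated proposition.
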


The extension of (\ref{dirichletaverage}) to an infinite-dimension setting proceeds as follows. Let $\Omega$ to  be a nonempty open convex set in $\C^s$, $b\in (\C^+)^{\N_0}$ and $f\in \mcS(\Omega):= \mcS(\Omega,\C)$ a measurable function. For $\tau\in \Omega^{\N_0}\subset (\C^s)^{\N_0}$ and $u\in \triangle^\infty$, define $u\cdot \tau$ to be the bilinear mapping $(u,\tau)\mapsto \displaystyle{\sum_{i=1}^\infty}\, u_i \tau^i$. The infinite sum exists whenever 
\be\label{cond}
\limsup_{n\to\infty} \sqrt[n]{\|\tau^n\|} < \infty, 
\ee
where $\|\cdot\|$ denotes the canonical Euclidean norm on $\C^s$. (See also \cite{FM09}.) We refer to $\tau = (\tau^1, \ldots, \tau^n, \ldots)$ as a knot vector and to $b\in (\C^+)^{\N_0}$ as the weight vector associated with the Dirichlet average. 

\begin{definition} The Dirichlet average $F:(\C^+)^{\N_0}\times\Omega^{\N_0}\to \C$ on $\triangle^\infty$ is defined by
\[
F(b;\tau) := \int_{\triangle^\infty} f(u\cdot\tau)\,d\mu_b(u),
\]
where $\mu_b = \displaystyle{\lim_{\longleftarrow} }\;\mu_b^n$ is the projective limit the Dirichlet measures $\mu_b^n$ whose density functions are given by (\ref{dirmeas}).
\end{definition}

Under the assumption that $f:\Omega\to\C$ is a holomorphic function and the knot vector $\tau$ satisfies condition (\ref{cond}), the Dirichlet average on $\triangle^\infty$ exists and is holomorph on $(\C^+)^\infty$ for fixed $\tau\in\Omega^{\N_0}$. 

Using the fact that $\triangle^\infty$ is the projective limit of its finite-dimensional projections $\triangle^n$, $n\in\N$, the following known properties of $F$ extend naturally to the infinite-dimensional setting. (Cf. also \cite{FM10}.)
\begin{itemize}
\item Let $\sigma: \N_0^\infty\to\N_0^\infty$ be a permutation. Then 
	\[
	F(b_{\sigma(0)}, b_{\sigma(1)}, \ldots, ; \tau^{\sigma(0)}, \tau^{\sigma(1)}, \ldots) = F(b_0,b_1, \ldots; \tau^0, \tau^1, \ldots);
	\]
\item $F(b_0, b_1, b_2, \ldots ; \tau^1, \tau^1, \tau^2, \ldots) = F(b_0+b_1, b_2, \ldots ; \tau^1, \tau^2, \ldots)$;
\item $F(0, b_1, b_2, \ldots ; \tau^0, \tau^1, \tau^2, \ldots) = F(b_1, b_2, \ldots ; \tau^1, \tau^2, \ldots)$;
\item If $\tau = (z,z,z,\ldots)\in \Omega^{\N_0}$, then $u\cdot \tau = z\,\displaystyle{\sum_{i=0}^\infty}\, u_i = z\in \C^s$, and thus $F(b;\tau) = f(z)$.
\end{itemize}

Now suppose that the weight vector $b\in \ell^1 (\N_0,\R_+)$. Let $c:= \displaystyle{\sum_{i=0}^\infty}\; b_i$ and $w_i := \frac{b_i}{c}$. Since for $j = 1, \ldots, n$, the equation
\[
\int_{\triangle^n} u_j d\mu_b^n (u) = \frac{b_j}{\displaystyle{\sum_{i=0}^n b_i}}
\]
holds for all finite-dimensional projections $\triangle^n$ of $\triangle^\infty$ (cf. \cite{carlson}), we have
\[
\int_{\triangle^\infty} u_j d\mu_b (u) = \frac{b_j}{c} = w_j, \quad\forall j\in \N_0.
\]
In a similar fashion, using the fact that (Section 4.4 in \cite{carlson}), 
\[
u_1^{m_1}\cdots u_k^{m_k} d\mu_b^k (u) = \frac{(\Gamma(b_1 + \cdots b_k))(\Gamma(b_1 + m_1)\cdots \Gamma(b_k+m_k))}{(\Gamma(b_1)\cdots \Gamma(b_k))(\Gamma(b_1 + m_1 + \cdots + b_k+m_k))} \,d\mu_{b+m}^k (u),
\]
for $m\in \N^k$, one obtains the identity
\be\label{10}
u_j d\mu_b (u) = w_j d\mu_{b + e_j} (u), \quad j \in \N_0,
\ee
generalizing the corresponding finite-dimensional identity. (Cf. \cite{carlson}.) (Here $e_j := \{\delta_{i,j}\st i\in \N_0\}$.) From (\ref{10}) one obtains the identiy
\[
\int_{\triangle^\infty} f(u\cdot\tau) d\mu_b(u) = \sum_{j=0}^\infty w_j \int_{\triangle^\infty} f(u\cdot\tau) d\mu_{b+e_j}(u),
\]
or, equivalently,
\[
F(b;\tau) = \sum_{j=0}^\infty w_j F(b+e_j; \tau).
\]
In particular, for $x\in \R^s$ and $g(x) := x f(x)$, this last equation gives
\[
G(b;\tau) = \sum_{j=0}^\infty w_j \tau^j F(b+e_j; \tau),
\] 
where $\tau^j\in \C^s$ is the $j$th component of $\tau$.

The results regarding the relations between Dirichlet averages found in \cite{carlson91}, Section 5, or \cite{neuman}, Section 3, transfer easily to the infinite-dimensional setting using the definition of projective limit. In addition, several other identities can be derived using this extension and infinite-dimensional analogs of special functions, such as for instance the Lauricella $F_B$ function, defined. For more details, we refer to \cite{M09,M12,FM10}.

Of particular interest are fractional derivatives of Dirichlet averages and their relation to the Dirichlet averages of fractional derivatives. (See also \cite{FM10}, where these notions were considered in the context of Weyl fractional derivatives and integrals.) 

To this end, let $f\in \Psi_+$. Let $z\in \C^+$ and let $n:= \lceil\Re z\rceil$. Furthermore, let $x := (x_1, \ldots, x_s)^\top\in [0,\infty)^s$. The partial fractional derivative $\mcD^z_i$ with respect to $x_i $, $i = 1, \ldots, s$, of order $z$ is defined by
\[
\mcD_i^z f (x) := (D^n_i f)*K_{n-z}^i = \frac{1}{\Gamma (z)} \int_\R (D_i^n f)(\xi) (x_i - \xi)_+^{z-1} d\xi,
\]
where $D_i^n := \frac{\partial^n}{\partial x_i^n}$.

Consider for a moment the case $s:=1$. Let $\tau\in\Omega^k$ and denote by $\partial_i := \frac{\partial}{\partial\tau_i}$, $i = 0,1,\ldots, k$, the partial derivative operator. Let $f:\Omega\to \R$ and consider the operator $\displaystyle{\sum_{i=0}^k} \;\partial_i = \inn{\nabla}{e(k)}$, where $\nabla$ is the $k$-dimensional gradient and $e(k) = (1, \ldots, 1)\in\N^k$. However, $\inn{\nabla}{e(k)} f$ is equal to the \textit{univariate} derivative $\displaystyle{\frac{d}{dx}} f(x,\ldots, x)$, where $x:= \tau_1 = \cdots = \tau_k$. This suggests the following definition. 

\begin{definition} Let $g\in \Psi_+$ and let $z\in \C^+$ with $\lceil\Re z\rceil \leq k$. Then
\begin{eqnarray*}
\left(\sum_{i=0}^k \partial_i\right)^z g (\tau) & := \displaystyle{\frac{(-1)^n}{\Gamma (\nu)} \frac{d^n}{d x^n} \,\int_{\R_+} t^{\nu - 1} g(x+t, \ldots, x+t) dt}\\
& = \displaystyle{\frac{(-1)^n}{\Gamma (\nu)} \,\int_{\R_+} t^{\nu - 1} g^{(n)}(x+t, \ldots, x+t) \,dt}.
\end{eqnarray*}
\end{definition}
Employing this definition, it was shown in \cite{FM10} that
\[
(\mcD_i^z F) (b(k); \tau) = \int_{\triangle^k} u_i^z \,\mcD^{z}f (u\cdot \tau) d\mu_{b(k)}(u),
\]
and, if $\{i_1,\ldots, i_m\}\subseteq\{0,1, \ldots, k\}$, $m = 0,1, \ldots, k$,
\[
(\mcD_{i_1}^{z_{i_1}}\cdots \mcD_{i_m}^{z_{i_m}} F) (b(k); \tau) = \int_{\triangle^k} u_{i_1}^{z_{i_1}}\cdots u_{i_m}^{z_{i_m}} \,\mcD^{(z_{i_1}+\cdots+z_{i_m})}f (u\cdot \tau) d\mu_{b(k)}(u),
\]
where $z_{i_\ell}\in \C^+$, $\ell = 1, \ldots, m$,  and $\lceil \Re z_{i_1}\rceil + \cdots + \lceil \Re z_{i_m}\rceil \leq k$.

In order to extend the above results to $\triangle^\infty$, we need to consider the operator $\partial := \partial (\infty) := \displaystyle{\sum_{i=0}^\infty}\, \partial_i$, where $\partial_i$ denotes again the partial derivative with respect to $\tau_i$, $i\in\N_0$.
 
Let $f\in \Psi_+^\infty := \{f\in \Psi \st \supp f \subseteq \displaystyle{\prod_{n\in \N} [0,\infty)}\}$ and let $z\in \C^+$ with $n:= \lceil\Re z\rceil$ and $\nu:= n - z$. Define $\partial^z := \left(\displaystyle{\sum_{i=0}^\infty}\, \partial_i\right)^z$ to be the operator on $\Psi_+^\infty$ given by the expression
\[
(\partial^z f)(\tau) := \frac{(-1)^n}{\Gamma (\nu)}\,\frac{d^n}{dx^n}\,\int_0^\infty t^{\nu-1} f(x +t)dt = \frac{(-1)^n}{\Gamma (\nu)}\,\int_0^\infty t^{\nu-1} f^{(n)}(x+t)dt,
\]
where $\R\ni x:= \tau_1 = \tau_2 = \cdots = \tau_n = \cdots$. Replacing $f$ by the Dirichlet average $G(b;\bullet)$ of some function $g\in\Psi_+^\infty$ and for a weight vector $b\in(\C^+)^{\N_0}\cap\ell^1 (\N_0)$, one can show that the following identities hold:
\[
(\partial^z G(b;\bullet))(\tau) = \int_{\triangle^\infty} \mcD^{z}g (u\cdot \tau) d\mu_b (u),
\]
\[
(\mcD_i^{z_i} G(b;\bullet))(\tau) = \int_{\triangle^\infty} u_i^{z_i} \mcD^{z_i}g (u\cdot \tau) d\mu_b (u),
\]
and 
\be
(\mcD_{i_1}^{z_{i_1}}\cdots \mcD_{i_m}^{z_{i_m}} G (b;\bullet))(\tau) = \int_{\triangle^\infty} u_{i_1}^{z_{i_1}}\cdots u_{i_m}^{z_{i_m}} \mcD^{(z_{i_1}+\cdots+z_{i_m})}g (u\cdot \tau) d\mu_{b}(u),
\ee
for any $\{i_1,\ldots, i_m\}\subseteq\N_0$.

For more details and further discussions, we again refer the interested reader to \cite{M09,M12,FM10}.
\section{Polynomial Splines of Complex Order}\label{sec5}
In this section, we define a wide class of splines that encompasses the classical polynomial Schoenberg splines of integral order and the fractional and complex B-splines introduced in \cite{unserblu00} and \cite{forster06}. This definition is motivated by and based on the original ideas in \cite{karlin, unserblu05} to define splines via certain classes of differential operators and the description in \cite{masso}. We introduce these \textit{polynomial splines of complex order $z\in \C_1 := \{\zeta\in \C\st \Re \zeta > 1\}$} as follows.

\begin{definition}
Let $z\in \C_1$ and let $\{a_k\st k\in \N_0\}\in \ell^\infty (\R)$. A solution of the fractional differential equation
\be\label{cs}
\mcD^z f = \sum_{k=0}^\infty a_k\, \delta (\bullet - k)
\ee
is called a \textit{polynomial spline of complex order $z$}.
\end{definition}

\begin{remarks}
\begin{enumerate}
\item	If in (\ref{cs}) we set $z := n\in \N$, then we obtain the cardinal polynomial B-spline of order $n$ as a solution. (See, for instance, \cite{karlin, masso, unserblu05}.)
\item	The right-hand side of (\ref{cs}) may be interpreted as a weighted Dirac comb or \textcyr{Sh}--function.
\end{enumerate}
\end{remarks}
The next result establishes the existence of splines of complex order.
\begin{proposition}
Let $z \in\C_1$. The function $B_z :\R\to \C$ given by
\be\label{Bz}
B_z (x) = \frac{1}{\Gamma(z)} \sum_{k=0}^\infty (-1)^k \left(z \atop k\right) (x-k)_+^{z-1}
\ee
is a solution of Equation (\ref{cs}). 
\end{proposition}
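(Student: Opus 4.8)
The plan is to read off $B_z$ as a \emph{locally finite} superposition of shifted truncated power functions and to differentiate it term by term, the decisive input being the fundamental identity (\ref{dertruncpow}). Writing
\[
B_z = \sum_{k=0}^\infty (-1)^k \left(z \atop k\right) \frac{(\bullet-k)_+^{z-1}}{\Gamma(z)},
\]
I would first note that for each fixed $x \in \R$ only the finitely many indices $k < x$ contribute, because $(x-k)_+^{z-1}$ vanishes once $k \geq x$. Thus the series is well defined pointwise, $B_z$ is a locally integrable function supported in $[0,\infty)$ and of at most polynomial growth, so it defines an element of $\Psi_+'$ to which the operator $\mcD^z$ of the preceding section applies.

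The heart of the argument, and the step I expect to be the main obstacle, is to justify interchanging $\mcD^z$ with the infinite sum. I would carry this out weakly. Denoting the partial sums by $B_z^N := \sum_{k=0}^N (-1)^k \left(z\atop k\right)(\bullet-k)_+^{z-1}/\Gamma(z)$ and testing against $\varphi \in \Psi_+$, the pairing is $\langle B_z^N,\varphi\rangle = \sum_{k=0}^N (-1)^k\left(z\atop k\right)\frac{1}{\Gamma(z)}\int_0^\infty t^{z-1}\varphi(t+k)\,dt$. Since $\varphi$ is rapidly decreasing, the coefficients $\frac{1}{\Gamma(z)}\int_0^\infty t^{z-1}\varphi(t+k)\,dt$ decay faster than any power of $k$, so $B_z^N \to B_z$ in the weak-$*$ topology of $\Psi_+'$. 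As $\mcD^z$ is a continuous linear operator on $\Psi_+'$ (Remark~\ref{Rem2.3} together with the distributional definition of $\mcD^z$), it commutes with this limit.

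It then remains to compute and to verify the admissibility of the coefficients. Applying (\ref{dertruncpow}) summand by summand gives
\[
\mcD^z B_z = \sum_{k=0}^\infty (-1)^k \left(z\atop k\right)\delta(\bullet-k),
\]
which is exactly the right-hand side of (\ref{cs}) with $a_k := (-1)^k\left(z\atop k\right)$. Finally I would confirm $\{a_k\}\in\ell^\infty(\R)$: rewriting $\left(z\atop k\right) = (-1)^k\,\Gamma(k-z)/(\Gamma(-z)\,\Gamma(k+1))$, so that $a_k = \Gamma(k-z)/(\Gamma(-z)\,\Gamma(k+1))$, the asymptotics $\Gamma(k-z)/\Gamma(k+1)\sim k^{-z-1}$ yield $|a_k|\sim|\Gamma(-z)|^{-1}k^{-\Re z-1}$, which tends to $0$ (indeed is summable) because $\Re z > 1$; for $z\in\N$ with $z\geq 2$ the coefficients vanish for $k>z$ and the claim is trivial. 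This shows $B_z$ solves (\ref{cs}) and completes the proof.
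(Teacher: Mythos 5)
Your proof is correct and takes essentially the same route as the paper's: both rest on linearity of $\mcD^z$ together with the term-by-term application of the identity (\ref{dertruncpow}), followed by a check that the coefficients $a_k = (-1)^k \left( z \atop k \right)$ are admissible. Where the paper is terse --- it asserts $B_z \in \Psi_+$ and cites \cite{forster06} for the bound $\sum_{k\geq 0} \left| \left( z \atop k \right) \right| \leq c\, e^{|z-1|}$ --- you supply the justifications yourself (weak-$*$ convergence of the partial sums in $\Psi_+'$ plus continuity of $\mcD^z$ there, and the Gamma-function asymptotics $\left| \left( z \atop k \right) \right| \sim |\Gamma(-z)|^{-1} k^{-\Re z - 1}$ with the integer case handled separately), and your placement of $B_z$ in $\Psi_+'$ rather than $\Psi_+$ is in fact the more accurate statement, since $B_z$ is neither smooth at the knots nor rapidly decreasing.
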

\begin{proof}
First note that $B_z\in \Psi_+$. The linearity of $\mcD^z$ and Equation (\ref{dertruncpow}) imply that
\[
\mcD^z B_z (x) = \sum_{k=0}^\infty (-1)^k \left(z \atop k\right) \mcD^z \left[\frac{(x-k)_+^{z-1}}{\Gamma (z)}\right]
= \sum_{k=0}^\infty (-1)^k \left(z \atop k\right)\,\delta (x - k).
\]
We note that the infinite series $\displaystyle{\sum_{k=0}^\infty}\, (-1)^k \left(z \atop k\right)$ is bounded above by $c\,e^{|z-1|}$, $c > 0$. (See \cite{forster06} for a verification of this statement.)\qed
\end{proof}

The function $B_z$ defined in (\ref{Bz}) is called a {\em complex B-spline}. Complex B-splines were first introduced in \cite{forster06}  and later explored and generalized in \cite{FM07,FM09,FM09a,M09,FM10}. They provide a generalization of the classical Schoenberg polynomial splines to complex orders $z\in \C_1$ and contain the fractional B-splines defined in \cite{unserblu00}. 

Complex B-splines $B_z: \R \to\C$ were originally defined in the Fourier domain via the formula
\begin{equation}
\mathcal{F}(B_z)(\omega ) =: \widehat{B}_z (\omega)  = \left( \frac{1-e^{-i\omega}}{i\omega}\right)^z, \quad z\in \C_1.
\label{eq Definition Fourier B-Spline}
\end{equation}
Note that (\ref{eq Definition Fourier B-Spline}) possesses the continuous continuation $\widehat{B}_{z}(0)=1$ at the origin. As
\[
\left\{\frac{1-e^{-i\omega}}{i\omega}\;\Big\vert\; \omega\in \R\right\} \cap \left\{y \in\R \mid y<0\right\} = \emptyset,
\] 
complex B-splines reside on the main branch of the complex logarithm and are thus well-defined. For $z := n\in\N$, we obtain Schoenberg's polynomial cardinal splines.

\begin{remark}
For real $z > 0$, the function $\displaystyle{\Omega(z) := \left( \frac{1-e^{-i\omega}}{i\omega}\right)^z}$ and its time domain representation were already investigated in \cite{westphal} in connection with fractional powers of operators and later also in \cite{unserblu00} in the context of extending Schoenberg's polynomial splines to real orders. In the former, a proof that this function is in $L^1 (0,\infty)$ was given using arguments from summability theory (cf. Lemma 2 in \cite{westphal}), and in the latter the same result was shown but with a different proof. In addition, it was proved in \cite{unserblu00} that for real $z > 0$, $\Omega(z)\in L^2 (\R)$ for $z > 1/2$ (using our notation). (Cf. Theorem 3.2 in \cite{unserblu00}.)
\end{remark}

Complex B-splines possess several interesting basic properties, which are discussed in \cite{forster06}. In the following, we summarize the most important ones for our purposes. 

Fourier inversion of  (\ref{eq Definition Fourier B-Spline}) shows that complex B-splines are piecewise polynomials of complex degree. More precisely, the following result holds. (See \cite{forster06} for the proof.)

\begin{proposition}
 Complex B-splines have a time-domain representation of the form
\be\label{cb}
  B_z(t) = \frac{1}{\Gamma(z)} \sum_{k= 0}^\infty (-1)^k \left( {z} \atop {k}\right) (t-k)_+^{z-1},
\ee
where the above sum exists pointwise for all $t\in\R$ and in $L^2(\R)$-norm. \end{proposition}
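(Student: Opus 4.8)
The plan is to prove two things separately: that the series in (\ref{cb}) converges pointwise and in $L^2(\R)$, and that its sum equals the function $B_z$ defined in the Fourier domain by (\ref{eq Definition Fourier B-Spline}). The pointwise statement is essentially free: for fixed $t$ the truncation $(t-k)_+^{z-1}$ vanishes whenever $k>t$, so only the indices $k=0,1,\ldots,\lfloor t\rfloor$ contribute and (\ref{cb}) reduces to a finite sum (the terms being continuous with value $0$ at $t=k$, since $\Re z>1$). The substance of the proposition therefore lies in the $L^2$ claim and in the identification with the inverse Fourier transform. I would first record that $\widehat{B}_z\in L^2(\R)$: near the origin $\frac{1-e^{-i\omega}}{i\omega}\to1$ is bounded, while for large $|\omega|$ one has $|\widehat{B}_z(\omega)|\le C_z|\omega|^{-\Re z}$, the bounded argument of $\frac{1-e^{-i\omega}}{i\omega}$ keeping the contribution of $\Im z$ under control; this is square-integrable at infinity because $\Re z>1>\tfrac12$. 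Hence (\ref{eq Definition Fourier B-Spline}) determines a unique $B_z\in L^2(\R)$ by Fourier inversion.

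Next I would expand the numerator by the generalized binomial theorem, $(1-e^{-i\omega})^z=\sum_{k=0}^\infty(-1)^k\left(z\atop k\right)e^{-ik\omega}$, and note that this series converges absolutely and uniformly in $\omega$: the Gamma-function asymptotics give $\left|\left(z\atop k\right)\right|=O(k^{-\Re z-1})$, so $\sum_k|\left(z\atop k\right)|<\infty$ for $\Re z>0$. Multiplying by $(i\omega)^{-z}$ formally yields $\widehat{B}_z(\omega)=\sum_{k=0}^\infty(-1)^k\left(z\atop k\right)e^{-ik\omega}(i\omega)^{-z}$, and the point is to recognise each summand as the Fourier transform of the corresponding term of (\ref{cb}). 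This is exactly what Section \ref{sec3} supplies: the kernel $K_z(x)=x_+^{z-1}/\Gamma(z)$ has Fourier transform $(i\omega)^{-z}$ (up to the sign fixed by the Fourier convention), the shift by $k$ contributes $e^{-ik\omega}$, and, consistently, (\ref{intdelta}) identifies $\mcD^{-z}\delta(\bullet-k)=(\bullet-k)_+^{z-1}/\Gamma(z)$ while $\mcD^{-z}$ acts as multiplication by $(i\omega)^{-z}$. Thus, modulo the bookkeeping of the branch of the logarithm, the $k$-th summand is precisely $\mcF\big[(-1)^k\left(z\atop k\right)(\bullet-k)_+^{z-1}/\Gamma(z)\big]$, so that inverting term by term reproduces (\ref{cb}).

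The main obstacle is to justify this termwise inversion in $L^2$, and here the naive argument fails: the individual functions $(\bullet-k)_+^{z-1}$ grow like $t^{\Re z-1}$ as $t\to\infty$ and hence are \emph{not} in $L^2(\R)$ when $\Re z>1$; equivalently $e^{-ik\omega}(i\omega)^{-z}$ is not square-integrable near $\omega=0$. It is only the cancellation carried by the alternating binomial weights that yields an $L^2$ function, so the interchange of summation and inversion cannot be done one term at a time. I would instead control the tail directly: writing $S_N$ for the $N$-th partial sum in the Fourier domain and $R_N(\omega):=\sum_{k>N}(-1)^k\left(z\atop k\right)e^{-ik\omega}$, one has $\widehat{B}_z-S_N=(i\omega)^{-z}R_N$ with $|R_N(\omega)|\le\rho_N:=\sum_{k>N}|\left(z\atop k\right)|\to0$. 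Away from the origin this gives $\int_{|\omega|\ge1}|\omega|^{-2\Re z}|R_N|^2\,d\omega\le\rho_N^2\int_{|\omega|\ge1}|\omega|^{-2\Re z}d\omega\to0$, while near the origin the singularity of $(i\omega)^{-z}$ must be absorbed by the zero of $(1-e^{-i\omega})^z$, which vanishes to order $z$; the contribution of $\int_{|\omega|<1}$ is then controlled using the uniform convergence of the binomial series together with the boundedness of $(1-e^{-i\omega})^z(i\omega)^{-z}$ there.

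Alternatively, and more cleanly, I would bypass the termwise analysis by Fourier-transforming the fractional differential equation itself. By the preceding Proposition the function (\ref{Bz}) solves (\ref{cs}) with $a_k=(-1)^k\left(z\atop k\right)$; since $\mcD^{-z}=(\bullet)*K_z$ has Fourier symbol $(i\omega)^{-z}$, the operator $\mcD^z$ has symbol $(i\omega)^z$, and applying $\mcF$ to $\mcD^z B_z=\sum_k(-1)^k\left(z\atop k\right)\delta(\bullet-k)$ gives $(i\omega)^z\,\widehat{B}_z(\omega)=\sum_k(-1)^k\left(z\atop k\right)e^{ik\omega}=(1-e^{-i\omega})^z$ after summation (again up to the sign convention). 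Dividing by $(i\omega)^z$ returns exactly (\ref{eq Definition Fourier B-Spline}); since $\mcD^{\pm z}$ is invertible on the relevant space by Theorem \ref{thm1}, the $L^2$ solution is unique and the series (\ref{cb}) and the Fourier-defined $B_z$ coincide. I expect the residual difficulty in either route to be purely the convention-matching, namely the branch of the complex power and the sign in $\mcF$, which is routine once fixed at the outset.
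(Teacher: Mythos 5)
First, a point of comparison: the paper itself contains no proof of this proposition --- it describes the result as following from ``Fourier inversion'' of (\ref{eq Definition Fourier B-Spline}) and defers entirely to \cite{forster06}. Your general strategy (binomial expansion of $(1-e^{-i\omega})^z$ plus termwise inversion) is the strategy indicated there, and your pointwise argument and the preliminary facts ($\widehat{B}_z\in L^2(\R)$, $\sum_k |\left(z \atop k\right)|<\infty$) are correct. The genuine gap is in your $L^2$ argument: the near-origin estimate in your first route is false, because the cancellation you invoke is a property of the \emph{full} series, not of its tails. Explicitly, $R_N(0)=\sum_{k>N}(-1)^k\left(z \atop k\right)=0-\sum_{k=0}^{N}(-1)^k\left(z \atop k\right)=-(-1)^N\left(z-1 \atop N\right)$, which is nonzero for non-integer $z$ (it decays like $N^{-\Re z}$ but never vanishes). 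Hence near $\omega=0$ one has $|(i\omega)^{-z}R_N(\omega)|\sim\left|\left(z-1 \atop N\right)\right|\,|\omega|^{-\Re z}$, and $\int_{|\omega|<1}|\omega|^{-2\Re z}\,d\omega=\infty$ since $\Re z>1>\tfrac12$. So $\|\widehat{B}_z-S_N\|_{L^2}=\infty$ for \emph{every} $N$; no smallness of $\rho_N$ can repair this, because the divergence sits in the uncancelled constant term of the tail, not in its size. The same obstruction is visible in the time domain: for $t>N$ the $N$-th partial sum behaves like $(-1)^N\left(z-1 \atop N\right)t^{z-1}/\Gamma(z)$, so no partial sum of (\ref{cb}) lies in $L^2(\R)$ at all. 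Consequently the $L^2$ assertion cannot be obtained --- or even literally interpreted --- as norm convergence of partial sums controlled by your tail estimate; making that assertion precise and proving it is exactly the technical content the paper delegates to \cite{forster06}, and your argument does not supply it.

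Your alternative route (Fourier transforming $\mcD^z B_z=\sum_k(-1)^k\left(z \atop k\right)\delta(\bullet-k)$) is sound only as an identification of tempered distributions, and even there a step is missing: solving $(i\omega)^z\widehat{f}=(1-e^{-i\omega})^z$ by ``dividing by $(i\omega)^z$'' determines $\widehat{f}$ in $\mcS'$ only modulo distributions supported at the origin, i.e., determines $f$ only modulo polynomials --- this ambiguity is precisely why $\Phi'=\mcS'/\mcP$ appears in Proposition \ref{prop1}. You must exclude the polynomial discrepancy explicitly, e.g., by noting that both candidate solutions vanish on $(-\infty,0)$ while no nonzero polynomial does. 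Also, Theorem \ref{thm1} gives invertibility of $\mcD^{\pm z}$ on $\Phi$ (extended to $\Psi'$), not an ``$L^2$ uniqueness.'' Once repaired, this route does yield the almost-everywhere identity between the locally finite pointwise sum and the Fourier-defined $B_z$ --- which is the heart of the proposition --- but it still says nothing about convergence of the series in the $L^2$ sense, so the second assertion remains unproven in both of your routes.
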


Equation (\ref{cb}) shows that $B_z$ has, in general, non-compact support contained in $[0,\infty)$. It was also shown in \cite{forster06} that complex B-splines are elements of $L^1(\R) \cap L^2(\R)$ and, due to their decay in frequency domain induced by the polynomial $\omega^z$ in the denominator of (\ref{eq Definition Fourier B-Spline}), belong to the Sobolev spaces $W_2^r(\R)$ (with respect to the $L^2$-Norm and with weight $(1+|x|^2)^r$) for $r < \Re z - \frac{1}{2}$. The smoothness of their Fourier transform yields a fast decay in time domain: 
\be\label{asymptotics}
B_z (x) \in \mathcal{O}(x^{-m}), \quad \mbox{for $\N\ni m < \Re z +1$,  as $x \to \infty$}. 
\ee
\begin{remark}
Prior to \cite{forster06}, the asymptotic behavior (\ref{asymptotics}) of the function $\Omega(z)$ for real $z > 1$ was already shown in \cite{butzerwestphal}, (Proposition 3.1), to be of order $\mathcal{O}(x^{-z-1})$ as $x\to \infty$. The same estimate was proven later in \cite{unserblu00}, (Theorem 3.1), for real $z > 0$. As we are more interested in the approximation-theoretic aspects of complex B-splines, we restrict our attention to the case $\Re z > 1$, which yields continuous functions.
\end{remark}

If $z, z_1, z_2 \in \C_1$, then the convolution relation $B_{z_1} \ast B_{z_2} = B_{z_1+z_2}$ and the recursion relation 
\[
B_z (x) = \frac{x}{z-1}\,B_{z-1}(x) + \frac{z-x}{z-1}\,B_{z-1} (x - 1)
\]
hold.
Complex B-splines are scaling functions and generate multiresolution analyses of $L^2(\R)$ and wavelets. Furthermore, they relate difference and differential operators. For more details and proofs, we refer the interested reader to \cite{forster06,FM07,FM09,FM09a,FM11a}.

Unlike the classical cardinal B-splines, complex B-splines $B_z$ possess an additional modulation and phase factor in the frequency domain:
\[
\widehat{B}_z (\omega) = \widehat{B}_{\Re z}(\omega)\,e^{i \Im z \ln |\Omega(\omega)|}\,e^{- \Im z \arg \Omega(\omega)}.
\]
The existence of these two factors allows the extraction of additional information from sampled data and the manipulation of images. Phase information ($e^{i \Im z \ln |\Omega(\omega)|}$) and an adjustable smoothness parameter, namely $\Re z$, are already built into their definition. Thus, they define a {\em continuous} family, with respect to smoothness, of approximation spaces, and allow to incorporate phase information for single band frequency analysis. (See for instance \cite{forster06,FM11a}.)

In \cite{FM09} and \cite{FM10}, some further properties of complex B-splines were investigated. In particular, connections between complex derivatives of Riemann-Liouville or Weyl type and Dirichlet averages were exhibited. Whereas in \cite{FM09} the emphasis was on univariate complex B-splines and their applications to statistical processes, multivariate complex B-splines were defined in \cite{FM10} using a well-known geometric formula for classical multivariate B-splines \cite{KMR,micchelli}. It was also shown that Dirichlet averages are especially well-suited to explore the properties of multivariate complex B-splines. Using Dirichlet averages, several classical multivariate B-spline identities were generalized to the complex setting. The existence of fundamental complex B-splines was investigated in \cite{FGMS} and in \cite{FMU} periodic complex B-splines were constructed. There also exist interesting relationships between complex B-splines, Dirichlet averages and difference operators, several of which are highlighted in \cite{FM09a}. Here, we present some of these results to elucidate the connections between fractional operators, Dirichlet averages and splines of complex order. 

To this end, let $\diffm g(t) = g(t)- g(t-1)$ denote the backward difference operator for functions $g: \R \to \R$. The $n$th backward difference operator is iteratively defined by setting $\diffm^{n+1} g(t) := \diffm (\diffm^{n} g(t))$, $ n\in\N$. Then
$$
\diffm^n g(t) = \sum_{k=0}^n \left( n\atop k\right) (-1)^k g(t-k)
$$
vanishes for any function $g$ which is a polynomial of degree $\leq n-1$, $n\in\N$. These difference operators are uniquely related to the classical cardinal B-splines:
\begin{equation}
B_n(x) = \frac{1}{(n-1)!} \nabla^n_- x_+^{n-1} = \frac{1}{(n-1)!} \sum_{k=0}^n (-1)^k  \left( n \atop k \right) (x-k)_+^{n-1}.
\label{eq B-Spline}
\end{equation}
In \cite{unserblu00} and \cite{forster06}, it was shown that the backward difference operators have a natural formal extension to fractional and complex order:
$$
\diffm^z g(t) = \sum_{k \geq 0} (-1)^k \left( z \atop k\right) g(t-k), \quad z\in\C_+.
$$
Note that the right hand side is finite for bounded functions $g$ because $\sum_{k\geq 0} |({z \atop k})| \leq \const\, e^{|z-1|}$. (Cf. \cite{forster06}.)

The $n$th divided difference for a knot sequence $t_0 <t_1 <\ldots <t_n$ on the real line for functions $g: \R \to \R$ or $\C$ is recursively defined as
\begin{eqnarray}
[t_0]g & = & g(t_0) \quad \mbox{for } n=0,
\nonumber\\
{[ t_0,\ldots, t_n ]g} &=& \frac{[t_0,\ldots,t_{n-1}]g - [t_1,\ldots, t_{n}] g}{t_0 -t_n}
\quad \mbox{for } n \geq 1.
\label{eq Dividierte Differenzen}
\end{eqnarray}
The formulas have a nonrecursive equivalent which reads
$$
[t_0,\ldots, t_n]g = \sum_{j=0}^n \frac{g(t_j)}{\prod_{l\neq j} (t_j-t_l)}.
$$

The classical cardinal B-spline has a representation in terms of the $n$th divided difference on a uniform knot sequence $\{0,1,\ldots, n\}$ of the form
\begin{eqnarray}
B_n(x) & = & (-1)^n n [0,1,\ldots,n] (x - \bullet)_+^{n-1}.
\label{eq B-spline als dividierte Differenz}
\end{eqnarray}
For complex B-splines, a similar computation produces
\begin{eqnarray}
B_z(x) 
&= & z \sum_{k\geq 0} (-1)^k \frac{1}{\Gamma(z-k+1) \Gamma(k+1)} (x-k)_+^{z-1}.
\label{eq z mal Form von komplexen B-splines}
\end{eqnarray}

The above representation of complex B-splines motivates the definition of a complex divided difference operator $[z;\N_0](\bullet)$ of order $z\in\C_+$ for uniform knots $\{0,1,2,\ldots\}  = \N_0$:
\begin{equation}
[z;\N_0]g  := \sum_{k\geq 0} (-1)^k \frac{g(k)}{\Gamma(z-k+1) \Gamma(k+1)}.
\label{eq Complexe Dividierte Differenz}
\end{equation}
Then
\begin{equation}
B_z(x) = z [z; \N_0](x - \bullet)_+^{z-1}.
\label{eq Complex B-spline als dividierte Differenz}
\end{equation}
For $z= n\in\N_0$, Eqns. (\ref{eq z mal Form von komplexen B-splines}), (\ref{eq Complexe Dividierte Differenz}), and
(up to a factor $(-1)^n$) (\ref{eq Complex B-spline als dividierte Differenz}) reduce to the standard forms (\ref{eq B-Spline}), (\ref{eq Dividierte Differenzen}), and (\ref{eq B-spline als dividierte Differenz}) for $n$th order finite differences on uniform knots and classical B-splines.

We recall the following relation between the $n$-th order cardinal B-spline $B_n$, $n\in\N$, and the divided differences:
\begin{equation}
[0,1,\ldots,n] g = \frac{1}{n!} \int_\R B_n(t) D^n g(t) \, dt.
\label{eq Verbindung Dividierte Differenzen und B-splines als Integrationskern}
\end{equation}
An analogue for complex B-splines reads as follows. (See \cite{FM09}.)

\begin{theorem}  Suppose that $z \in \C_1$ and $g\in \Psi_+$. Then the complex B-spline $B_z$ and the complex divided difference (\ref{eq Complexe Dividierte Differenz}) obey the following relation.
\be\label{interpretation}
[z;\N_0] g = \frac{1}{\Gamma(z)} \int_\R B_z(t) \mcD^{z}g (t)\, dt.
\ee
\end{theorem}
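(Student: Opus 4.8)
The plan is to evaluate the right-hand side of (\ref{interpretation}) directly, inserting the time-domain series (\ref{cb}) for $B_z$ and reducing the whole expression to the inversion relation $\mcD^{-z}\mcD^{z}=\mathrm{id}$ of Theorem \ref{thm1}. Writing $B_z$ in truncated-power form and interchanging (for the moment, formally) summation and integration gives
\[
\frac{1}{\Gamma(z)}\int_\R B_z(t)\,\mcD^{z}g(t)\,dt
=\frac{1}{\Gamma(z)^2}\sum_{k\geq 0}(-1)^k\left(z\atop k\right)\int_\R (t-k)_+^{z-1}\,\mcD^{z}g(t)\,dt ,
\]
so that everything rests on evaluating the scalar integrals $\int_\R (t-k)_+^{z-1}\,\mcD^{z}g(t)\,dt$ for each $k\in\N_0$.

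The key step is the identity
\[
\int_\R (t-k)_+^{z-1}\,\mcD^{z}g(t)\,dt=\Gamma(z)\,g(k),\qquad k\in\N_0 .
\]
To prove it, I would substitute $t=k+s$ and recognise the resulting expression $\frac{1}{\Gamma(z)}\int_0^\infty s^{z-1}\,\mcD^{z}g(k+s)\,ds$ as the fractional integral $(\mcD^{-z}\mcD^{z}g)(k)$, reading off the definition of $\mcD^{-z}$. Since $g\in\Psi_+$, part~2 of Theorem \ref{thm1} yields $\mcD^{-z}\mcD^{z}g=g$, and hence the integral equals $\Gamma(z)\,g(k)$; the boundary terms hidden in this inversion vanish because $g\in\Psi_+\subset\mcS$ decays together with all its derivatives. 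Substituting this back and collecting the Gamma-function prefactors by means of $\left(z\atop k\right)=\Gamma(z+1)/\bigl(\Gamma(z-k+1)\,\Gamma(k+1)\bigr)$ reduces the sum to $\sum_{k\geq 0}(-1)^k\,g(k)/\bigl(\Gamma(z-k+1)\,\Gamma(k+1)\bigr)$, which is exactly $[z;\N_0]g$ as defined in (\ref{eq Complexe Dividierte Differenz}).

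The main obstacle is the rigorous justification of the term-by-term integration, since the individual truncated powers $(t-k)_+^{z-1}$ are not integrable on $\R$. They are, however, paired with $\mcD^{z}g$, which lies in $\Psi_+\subset\mcS$ and therefore decays faster than any polynomial, so each scalar integral is finite. I would handle the exchange by combining two facts: the partial sums of (\ref{cb}) converge to $B_z$ in $L^2(\R)$ and $\mcD^{z}g\in L^2(\R)$, which controls the integral of the limit; and the series $\sum_{k\geq 0}\bigl|\left(z\atop k\right)\bigr|\,|g(k)|$ converges absolutely, using the bound $\sum_{k\geq 0}\bigl|\left(z\atop k\right)\bigr|\leq c\,e^{|z-1|}$ (cf. \cite{forster06}) together with the rapid decay of $g$, which legitimises Fubini. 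The remaining care lies in tracking the normalisation constants and the sign conventions of $\mcD^{z}$ so that the $\Gamma$-factors collapse precisely onto the coefficients of (\ref{eq Complexe Dividierte Differenz}).
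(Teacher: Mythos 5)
Your strategy coincides with the paper's own (which it attributes to \cite{FM09} and reproduces in the displayed computation immediately after the theorem): expand $B_z$ via the truncated-power series (\ref{cb}), integrate term by term, recognize each scalar integral as a fractional integral evaluated at the knot $k$, and invoke the inversion $\mcD^{-z}\mcD^{z}=\mathrm{id}$ of Theorem \ref{thm1}. Your key identity $\int_\R (t-k)_+^{z-1}\,\mcD^{z}g(t)\,dt=\Gamma(z)\,g(k)$ is correct, and your justification of the sum--integral interchange (absolute convergence of $\sum_k |\left( z \atop k \right)|\,|g(k)|$ together with $L^2$ convergence of the partial sums against $\mcD^z g\in L^2$) is adequate and is more than the paper itself records.

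However, your final step contains a concrete arithmetic slip. From the key identity one gets
\[
\frac{1}{\Gamma(z)}\int_\R B_z(t)\,\mcD^{z}g(t)\,dt
=\frac{1}{\Gamma(z)}\sum_{k\geq 0}(-1)^k\left( z \atop k \right)g(k),
\]
and since $\frac{1}{\Gamma(z)}\left( z \atop k \right)=\frac{\Gamma(z+1)}{\Gamma(z)\,\Gamma(z-k+1)\,\Gamma(k+1)}=\frac{z}{\Gamma(z-k+1)\,\Gamma(k+1)}$, the right-hand side collapses to $z\,[z;\N_0]g$, not to $[z;\N_0]g$ as you assert. What your argument actually proves is
\[
[z;\N_0]g=\frac{1}{\Gamma(z+1)}\int_\R B_z(t)\,\mcD^{z}g(t)\,dt .
\]
This is not a defect of your method but of the statement as printed: the prefactor $1/\Gamma(z)$ in (\ref{interpretation}) is inconsistent with the definition (\ref{eq Complexe Dividierte Differenz}), with the representation (\ref{eq Complex B-spline als dividierte Differenz}), $B_z(x)=z\,[z;\N_0](x-\bullet)_+^{z-1}$, and with Theorem \ref{HG}, all of which require $1/\Gamma(z+1)$. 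A correct write-up must carry the factor $z$ honestly and either state the identity with the corrected normalization or flag the typo; as written, your claim that the Gamma factors \emph{collapse precisely} onto the coefficients of (\ref{eq Complexe Dividierte Differenz}) is false, and it conceals exactly the discrepancy your computation was in a position to detect.
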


The relationship (\ref{interpretation}) allows an interesting interpretation of a complex B-spline. For this purpose, we introduce the complex forward difference operator $\diffp^z$, $z \in \C_+$, acting on functions $g: \R\to\R$ via
\[
(\diffp^z g)(t):= \sum_{k\geq 0} (-1)^k \left(z \atop k\right) g(t+k).
\]
Then, if we interpret the integral $\int_\R B_z (t) g (t) dt$ as a weak integral as in Definition 2.9 in \cite{fuehr}, namely, 
\[
\int_\R B_z (t) g (t) dt =: \left\langle \int_\R B_z (t)\; \bullet\;dt, g\right\rangle,
\]
then we obtain
\begin{eqnarray*}
\left\langle \int_\R B_z(t)\; \bullet\; dt, g\right\rangle & = & \displaystyle{\int_\R B_z (t) g (t) dt}\\
& = & \displaystyle{\int_\R\sum_{k\geq 0} (-1)^k \left(z \atop k \right) \frac{1}{\Gamma (z)}\, (t - k)_+^{z - 1} g(t) dt}\\
& = & \displaystyle{\sum_{k\geq 0} (-1)^k \left( z \atop k \right)(\mcD^{-z} g)(k) = (\diffp^z \mcD^{-z} g)(0) = \left\langle\delta,\diffp^z \mcD^{-z} g \right\rangle}.
\end{eqnarray*}
In particular, substituting $\mcD^z g$ for $g$, we immediately obtain the following identity:
\[
\left\langle \int_\R B_z(t)\; \bullet\; dt, \mcD^{z}g\right\rangle = \left\langle\delta,\diffp^z g \right\rangle.
\]
Hence, $\left\langle \int_\R B_z(t)\; \bullet\; dt, \,\bullet\,\right\rangle$ can be thought of as a fractional integration operator of order $z$.

Next, we like to present an interesting and deep connection between complex B-splines, Dirichlet averages and divided differences. This connection is of stochastic nature and the case of integral order $n$ can be found in \cite{dm} and its extension to complex orders in \cite{FM09}.

To this end, $\tau := \{\tau^j \st j\in\mathbb{N}_0\}$ be an infinite increasing sequence of positive real-valued knots such that condition (\ref{cond}) applied to this particular setting holds. Furthermore, define a random variable $X := \sum_{j=0}^\infty \tau^j U_j$, where $U := (U_j)$ is an infinite random vector uniformly distributed over the simplex $\Delta^\infty$. We denote the expectation of a random variable by $\mathsf{E}$. Then is was shown in \cite{FM09} that
\be\label{111}
\mathsf{E} g(X) = \int_{\Delta^\infty} g(\tau\cdot u)\,d\mu_b (u),\quad \forall\,g\in C(\R),
\ee
and, using the fact that $\mcD^z$ is an endomorphism on $\Psi_+$, that
\be\label{Egz}
\mathsf{E} (\mcD^{z} g)(X) = \int_{\Delta^\infty} \mcD^z g(\tau\cdot u)\,d\mu_b (u).
\ee
\par
Now, let us consider the case $b =  e = (1,\ldots, 1)\in\mathbb{R}^{\infty}_+$. Then the following analog of the Hermite-Genocchi for complex B-splines was proved in \cite{FM09}.

\begin{theorem}[Hermite--Genocchi]\label{HG}
Let $z\in\mathbb{C}_1$ and let $B_z$ be the complex B-spline of order $z$. Then
\begin{eqnarray*}
[z;\mathbb{N}_0]g & = & \displaystyle{\frac{1}{\Gamma(z+1)}\,\int_{\Delta^\infty} \mcD^z g (\mathbb{N}_0\cdot u) d\mu_e (u) = \frac{1}{\Gamma(z+1)}\,\mathsf{E} (\mcD^z g) (X)}\\
& = & \displaystyle{\frac{1}{\Gamma(z+1)}\,\int_{\mathbb{R}} B_z (t) \mcD^z g(t) dt =(\diffp^z g)(0)} ,
\end{eqnarray*}
for all $g\in\Psi_+\cap C^\omega (\R)$, where $C^\omega (\R)$ denote the class of real-analytic functions on $\R$.
\end{theorem}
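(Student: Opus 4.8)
The plan is to read the asserted identity as a chain of four equalities and to verify each link by invoking a result already at our disposal, the conceptual pivot being the probabilistic representation of the complex B-spline as the Lebesgue density of the random variable $X$.

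First I would dispose of the two middle equalities. The passage from the simplex integral to the expectation, $\frac{1}{\Gamma(z+1)}\int_{\Delta^\infty}\mcD^z g(\mathbb{N}_0\cdot u)\,d\mu_e(u) = \frac{1}{\Gamma(z+1)}\mathsf{E}(\mcD^z g)(X)$, is exactly the specialization of identity (\ref{Egz}) to the uniform weight vector $b=e$, so that $\mu_e$ is the uniform Dirichlet measure on $\Delta^\infty$, and to the knot vector $\tau=\mathbb{N}_0$. Here the only point to check is that $\tau=(0,1,2,\ldots)$ satisfies the growth condition (\ref{cond}), which is immediate since $\limsup_{n\to\infty}\sqrt[n]{n}=1$, so that $X=\sum_{j} j\,U_j$ is almost surely finite and (\ref{111}) applies. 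Next, the identification $\mathsf{E}(\mcD^z g)(X) = \int_\R B_z(t)\,\mcD^z g(t)\,dt$ is where the geometry enters: I would invoke the fact, obtained from (\ref{111}) together with the projective-limit (Curry--Schoenberg type) description of $B_z$, that the push-forward of $\mu_e$ under the map $u\mapsto \mathbb{N}_0\cdot u$ is the measure $B_z(t)\,dt$ on $\R$; applying this to $h=\mcD^z g$, which again lies in $\Psi_+$ since $\mcD^z$ is an endomorphism of $\Psi_+$, yields $\mathsf{E}h(X)=\int_\R h(t)B_z(t)\,dt$.

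The outer equalities $[z;\mathbb{N}_0]g=\frac{1}{\Gamma(z+1)}\int_\R B_z(t)\mcD^z g(t)\,dt$ and the last link to $(\diffp^z g)(0)$ are handled by the earlier Theorem (\ref{interpretation}) and by the weak-integral computation preceding it, which already established $\int_\R B_z(t)\mcD^z g(t)\,dt=(\diffp^z g)(0)$; the bookkeeping of the $\Gamma$-factors is then routine, resting on $\left(z\atop k\right)=\Gamma(z+1)/(\Gamma(z-k+1)\Gamma(k+1))$, which converts the definition (\ref{eq Complexe Dividierte Differenz}) of $[z;\mathbb{N}_0]$ into a forward difference. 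An alternative and self-contained route to the link $[z;\mathbb{N}_0]g=\frac{1}{\Gamma(z+1)}\int_\R B_z(t)\mcD^z g(t)\,dt$, which I would actually carry out in detail, is to insert the series representation (\ref{cb}) of $B_z$, interchange summation and integration, and evaluate each term by the reproducing identity $\int_\R (t-k)_+^{z-1}\mcD^z g(t)\,dt=\Gamma(z)\,g(k)$; the latter follows from (\ref{intdelta}) and from $\mcD^{-z}\mcD^z g=g$ (Theorem \ref{thm1}), since $(t-k)_+^{z-1}/\Gamma(z)$ is the kernel of $\mcD^{-z}$ centered at $k$.

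The main obstacle is analytic rather than algebraic: justifying the interchange of the infinite sum defining $B_z$, and of $[z;\mathbb{N}_0]$, with the integration over $\R$, as well as the interchange of expectation with the series, in the non-compactly supported complex-order regime. This is precisely where the hypotheses are used: the bound $\sum_{k\geq 0}\left|\left(z\atop k\right)\right|\leq \const\,e^{|z-1|}$ controls the coefficients, the rapid decay of $\mcD^z g\in\Psi_+$ controls the tails in $t$, and real-analyticity of $g$ is what lets the finite-dimensional Hermite--Genocchi identities pass to the projective limit $\Delta^\infty$ while guaranteeing that the pointwise series manipulations converge. Establishing rigorously that the density of $X$ is precisely $B_z$ in the infinite-dimensional setting, via the Kolmogorov extension theorem and the compatibility of the finite-dimensional Dirichlet measures, is the conceptually deepest point, and the one I would treat most carefully.
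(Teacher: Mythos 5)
The paper itself contains no proof of Theorem~\ref{HG} --- it is quoted from \cite{FM09} --- but everything needed for the two \emph{outer} equalities is in the text immediately preceding the theorem, and your treatment of these is correct and essentially identical to that text: inserting the series (\ref{cb}) and using the reproducing identity
\[
\int_\R (t-k)_+^{z-1}\,\mcD^z g(t)\,dt \;=\; \Gamma(z)\,\bigl(\mcD^{-z}\mcD^z g\bigr)(k) \;=\; \Gamma(z)\,g(k),
\]
which follows from the definition of $\mcD^{-z}$, from (\ref{intdelta}), and from Theorem~\ref{thm1}, one obtains
\[
\int_\R B_z(t)\,\mcD^z g(t)\,dt \;=\; \sum_{k\geq 0}(-1)^k \left( z \atop k \right) g(k)\;=\;(\diffp^z g)(0)\;=\;\Gamma(z+1)\,[z;\N_0]g ,
\]
with convergence controlled by $\sum_{k\geq 0}\bigl|\left( z \atop k \right)\bigr|\leq \const\, e^{|z-1|}$ and the decay of $\mcD^z g\in\Psi_+$. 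One remark: this is not ``routine bookkeeping.'' Your two individually correct identities, $[z;\N_0]g=\frac{1}{\Gamma(z+1)}\int_\R B_z\,\mcD^z g\,dt$ and $\int_\R B_z\,\mcD^z g\,dt=(\diffp^z g)(0)$, together show that the \emph{final} equality in the printed chain of Theorem~\ref{HG} can only hold with an extra factor $1/\Gamma(z+1)$ (the same normalization discrepancy already visible between (\ref{interpretation}) and the theorem); a careful proof must flag and resolve this rather than absorb it.

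The genuine gap lies in the two middle, probabilistic equalities --- exactly the part not already supplied by the paper. The first of them, equality of the simplex integral with $\frac{1}{\Gamma(z+1)}\mathsf{E}(\mcD^z g)(X)$, is indeed just (\ref{Egz}) with $b=e$, $\tau=\N_0$; but (\ref{111}) and (\ref{Egz}) are nothing more than the change-of-variables formula defining the law of $X$, so all the substance sits in the remaining equality $\mathsf{E}(\mcD^z g)(X)=\int_\R B_z(t)\,\mcD^z g(t)\,dt$. You propose to obtain it from ``the fact that the push-forward of $\mu_e$ under $u\mapsto \N_0\cdot u$ is the measure $B_z(t)\,dt$.'' No such fact is available in the paper, and it cannot hold as stated: the push-forward of the probability measure $\mu_e$ is a single positive measure independent of $z$, whereas $B_z$ is complex-valued for $\Im z\neq 0$ and varies with $z$; hence $B_z(t)\,dt$ is not the law of $X$. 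What is true is only the weak, $z$-dependent identity tested against functions of the special form $\mcD^z g$ --- and that is precisely the statement to be proven, so invoking the density identification to prove it is circular. A workable argument must evaluate the simplex integral independently (via moments of $\mu_e$ and the Dirichlet-average identities of Section~\ref{sec4}; cf.\ \cite{FM09}) and compare the result with the divided-difference evaluation above. Your closing remark that the density claim is the point you ``would treat most carefully'' concedes that it is missing; moreover, the route you sketch for it --- Kolmogorov extension plus ``compatibility of the finite-dimensional Dirichlet measures'' --- breaks down in this case, because for the constant weight $e\notin\ell^1(\N_0)$ the uniform measures $\mu_e^n$ are \emph{not} projectively consistent (Dirichlet measures aggregate by \emph{adding} weights when coordinates are merged), and condition (\ref{cond}) only controls the convergence of $\sum_j j\,U_j$, not the existence of $\mu_e$.
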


Theorem \ref{HG} can be used to defined a larger class of B-splines of complex order. In particular, we like to consider an arbitrary increasing knot sequence and attach weights to each knot.
\begin{definition}[Cf. \cite{FM09}]
Let $b\in(\C^+)^{\N_0}$ be a weight vector and $\tau:= \{\tau^k\st \tau^0 = 0\} \in \R^{\N_0}$ an increasing knot sequence with the property that $\lim_{k\to \infty} \sqrt[k]{\tau^k} \leq \varrho$, for some $\varrho\in [0,e)$. Furthermore, let  $z\in\C_1$. A function $B_z(\bullet \mid b; \tau)$  satisfying
\begin{equation}
\int_\R B_z(t \mid b;\tau) \mcD^z g(t)\, dt = \int_{\triangle^\infty} \mcD^z g(\tau \cdot u)\, d \mu_b(u) 
\label{eq Def complex B-spline}
\end{equation}
for all $g\in \Psi_+$ is called a B-spline of complex order $z$ with weight vector $b$ and knot sequence $\tau$.
\end{definition}

We remark that for finite knot vector $\tau = \{\tau_0, \tau_1, \ldots, \tau_n\}\in(\R_0^+)^{n+1}$ and finite weight vector $b = \{b_0, b_1, \ldots, b_n\}\in (\R^+)^n$, $n\in \N$, and $z:= n\in\N$, Eq.~(\ref{eq Def complex B-spline}) defines the so-called {\em Dirichlet splines}. (Cf. \cite{dm}, where these splines were first introduced.) 

\vspace*{10pt}
To extend B-splines of complex order to a multivariate setting, one employs ridge functions. To this end, let $\lambda \in \R^s\setminus \{0\}$, $s\in \N$, be a direction, and let $g: \R \to \C$ be a function. The {\em ridge function}  $g_\lambda$ corresponding to $g$ in the direction of $\lambda$ is defined as the function $\R^s \to \C$ with
$$
g_\lambda(x) := g(\inn{\lambda}{x}), \quad\textrm{for all $x\in\R^s$}.
$$

\begin{definition} [See also \cite{FM10}]
Let $z\in\C_1$ and let $\tau=\{\tau^n\st \tau^0 = 0\}\in (\R^s)^{\N_0}$ be a knot sequence in $\R^s$ with the property that
\[
\exists\,\varrho\in [0,e):\,\displaystyle{\limsup_{n\to\infty}} \sqrt[n]{\|\tau^n\|} \leq \varrho. 
\]
Furthermore, let $\lambda\in\R^s\setminus\{0\}$ be such that $\lambda\tau := \{\skpl \lambda, \tau^n\skpr\}_{n\in\N_0}$ is separated, i.e., there exists a $\delta > 0$, so that $\inf \{|\skpl \lambda, \tau^n\skpr - \skpl \lambda, {\tau}^m\skpr|\st m, n\in \N_0\} \geq \delta$.

A function $\bB_z(\bullet \mid b,\tau): \R^s \to \C$ which satisfies the equation
\be\label{multivariate complex}
\int_{\R^s} g(\inn{\lambda}{x}) \bB_z(x\mid b,\tau)\, dx = \int_\R g(t) B_z(t\mid b, \lambda\tau) \, dt,
\ee
for all $g\in\Psi_+$ is called a multivariate B-spline of complex order $z$ with weight vector $b\in(\C^+)^{\N_0}$ and knot sequence $\tau$.
\end{definition}

Since ridge functions are dense in $L^2(\R^s)$ (see, for instance, \cite{pinkus97}), we conclude that $\bB_z(\bullet \mid b, \,{\tau}) \in L^2((\R^+_0)^s)$. Moreover, it follows from the Hermite-Genocchi formula for the univariate complex B-splines $B_z(\,\bullet\,|\, b, \lambda\tau)$ and (\ref{multivariate complex}), that 
$$\bB_z(\,x\,|\, b,\tau) = 0,\quad\textrm{when $x\notin [\tau]$},
$$ 
where $[\tau]$ denotes the convex hull of $\tau$.

Considering the special case $b=e= (1,1,1, \ldots)$, we may define multivariate divided differences of order $z$ on ridge functions as follows:
\begin{eqnarray*}
[z;\tau]g_\lambda & = & [z;\tau]g(\skpl \lambda, \bullet\skpr) =  \frac{1}{\Gamma(z)} \int_{\R^s} g^{(z)}(\skpl \lambda, x\skpr) \bB_z(x\mid e,\tau)\, dx\\
& =  & \frac{1}{\Gamma(z)} \int_\R g^{(z)}(t) B_z(t\mid e, \lambda \tau) \, dt =  [z; \lambda\tau]g,\qquad \forall g\in \Psi_+^\infty. 
\end{eqnarray*}
We refer the interested reader to \cite{FM11a} for more details and further results.

Next, we like to investigate the Fourier representation of multivariate B-splines of complex order and obtain a time domain representation for $\bB_z(x\mid b,\tau)$ as well. 

For this purpose recall that $B_{z}(\bullet\mid b;\tau)$ and $\bB_z (\bullet \mid b;\boldsymbol{\tau})$ are elements of $L^2(\R^s)$. Hence, we can apply the Plancherel transform to both functions and consider their frequency spectrum.

Let $\boldsymbol{\omega} = (\omega_1,\ldots, \omega_s) \in \R^s$ and let $\boldsymbol{\lambda} \in \R^s$, $\Vert \boldsymbol{\lambda} \Vert =1$, be the direction of $\boldsymbol{\omega} $, i.e., $\boldsymbol{\omega} = \varpi \boldsymbol{\lambda}$ for some $\varpi  \geq 0$. For $\boldsymbol{x} = (x_1,\ldots, x_s)\in\R^s$, we obtain as the Fourier transform of the generalized complex B-spline the following expression:
\begin{eqnarray*}
\widehat{B}_z(\varpi \mid b; \boldsymbol{\lambda}\boldsymbol{\tau}) & = & 
\int_{\R} e^{-i\varpi t} B_z(t\mid b; \boldsymbol{\lambda}\boldsymbol{\tau})\, dt
= \int_{\R^s} e^{-i\varpi \langle \boldsymbol{\lambda}, \boldsymbol{x} \rangle} \bB_z(\boldsymbol{x}\mid b;\boldsymbol{\tau}) \, d\boldsymbol{x}
\\
& = & \int_{\R^s} e^{-i \langle \boldsymbol{\omega}, \boldsymbol{x} \rangle}\bB_z(\boldsymbol{x}\mid b;\boldsymbol{\tau}) \, d\boldsymbol{x}
= \widehat{\bB}_z(\boldsymbol{\omega} \mid b; \boldsymbol{\tau}) = 
\widehat{\bB}_z(\varpi\boldsymbol{\lambda} \mid b; \boldsymbol{\tau}).
\end{eqnarray*}
Hence, the frequency spectrum of the multivariate complex B-spline along directions $\boldsymbol{\lambda}$ is given by the spectrum of the univariate spline with knots projected onto these $\boldsymbol{\lambda}$.

We now consider the special case when the knots are equidistantly distributed on a ray in $\R^s$, i.e., $\boldsymbol{\tau} = \boldsymbol{d} \N_{0}$ for some distance vector $\boldsymbol{d}\in \R^s$. Then we obtain
\begin{eqnarray*}
\widehat{\bB}_z(\varpi\boldsymbol{\lambda} \mid b; \boldsymbol{\tau}) & = & \widehat{\bB}_{z} (\varpi \boldsymbol{\lambda} \mid b; \boldsymbol{d} \N_{0}) = \widehat{\bB}_{z} (\varpi  \mid b; \skpl \boldsymbol{\lambda},\boldsymbol{d}\skpr \N_{0}),
\end{eqnarray*}
and specifically for weights $b=e= (1,1,1,\ldots)$:
\be\label{Standard multi Spline in Fourier domain}
\widehat{\bB}_z(\varpi\boldsymbol{\lambda} \mid e; \boldsymbol{d} \N_{0}) = 
\left( 
\frac{1- e^{-i \skpl \boldsymbol{\lambda},\boldsymbol{d}\skpr \varpi}}{i \varpi}
\right)^z, \qquad \textrm{in $\;L^2(\R^s$.}
\label{eq Standard multi Spline in Fourier domain}
\ee
Note that
$$
\widehat{\bB}_z(\varpi\boldsymbol{\lambda} \mid e; \boldsymbol{d} \N_{0}) = 0 \quad \gdw \quad \varpi \skpl \boldsymbol{\lambda},\boldsymbol{d}\skpr = 2 \pi K, \quad K\in\Z.
$$

Finally, we state an explicit formula for the time domain representation of a multivariate B-spline of complex order in case the weights are $b=e= (1,1,1,\ldots)$.

\begin{theorem}
The time domain representation of a multivariate B-spline of complex order with weight vector $b=e= (1,1,1,\ldots)$ is given by
\[
\bB_{z}(\boldsymbol{x} \mid e, \boldsymbol{d}\N_{0}) = \frac{\pi^{\frac{z}{2}-s}}{i^z \, 2^{\frac{z+3s}{2}}} \frac{\Gamma\left(\frac{s-z}{2}\right)}{\Gamma\left(\frac{z}{2}\right)}\sum_{n=0}^\infty (-1)^n \left( z \atop k \right) 
\Norm  \boldsymbol{x} - n \boldsymbol{d} \Norm^{z-s},
\]
in the sense of tempered distributions in $\Psi_+$ and in $L^2(\R^s)$.
\end{theorem}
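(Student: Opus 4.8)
The plan is to recover the time-domain representation by inverting the Plancherel transform in the identity (\ref{eq Standard multi Spline in Fourier domain}). Writing $\boldsymbol{\omega} = \varpi\boldsymbol{\lambda}$ with $\Norm\boldsymbol{\lambda}\Norm = 1$, so that $\varpi = \Norm\boldsymbol{\omega}\Norm$ and $\skpl\boldsymbol{\lambda},\boldsymbol{d}\skpr\,\varpi = \skpl\boldsymbol{\omega},\boldsymbol{d}\skpr$, that identity takes the Cartesian form
\[
\widehat{\bB}_z(\boldsymbol{\omega}\mid e;\boldsymbol{d}\N_0) = \frac{1}{i^z}\,\Norm\boldsymbol{\omega}\Norm^{-z}\,\bigl(1 - e^{-i\skpl\boldsymbol{\omega},\boldsymbol{d}\skpr}\bigr)^z.
\]
First I would expand the complex power of the numerator by the binomial series
\[
\bigl(1 - e^{-i\skpl\boldsymbol{\omega},\boldsymbol{d}\skpr}\bigr)^z = \sum_{n=0}^\infty (-1)^n \left(z\atop n\right) e^{-i n \skpl\boldsymbol{\omega},\boldsymbol{d}\skpr},
\]
which converges because $\sum_{n\geq 0}\bigl|\left(z\atop n\right)\bigr| \leq \const\,e^{|z-1|}$ (the bound already invoked in Section \ref{sec5}) and $|e^{-in\skpl\boldsymbol{\omega},\boldsymbol{d}\skpr}| = 1$.

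Next I would invert term by term. Each factor $e^{-in\skpl\boldsymbol{\omega},\boldsymbol{d}\skpr}$ is a modulation and hence corresponds under the inverse Plancherel transform to a translation by $n\boldsymbol{d}$, while the radial factor $\Norm\boldsymbol{\omega}\Norm^{-z}$ is a homogeneous tempered distribution whose Fourier transform is the Riesz kernel
\[
\mcF\bigl[\Norm\cdot\Norm^{-z}\bigr](\boldsymbol{y}) = \frac{2^{s-z}\,\pi^{s/2}\,\Gamma\left(\frac{s-z}{2}\right)}{\Gamma\left(\frac{z}{2}\right)}\,\Norm\boldsymbol{y}\Norm^{z-s}.
\]
Combining the translation with this kernel turns the $n$th summand into a multiple of $\Norm\boldsymbol{x} - n\boldsymbol{d}\Norm^{z-s}$, and collecting the inversion normalization together with the prefactor $i^{-z}$ and the Riesz constant above and matching yields the prefactor stated in the theorem, hence the claimed series. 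Since $\bB_z(\bullet\mid e;\boldsymbol{d}\N_0)\in L^2((\R_0^+)^s)$, the inversion and the term-by-term exchange are to be read weakly, i.e. after pairing with a test function $g\in\Psi_+$ through the defining relation (\ref{multivariate complex}); the rapid decay of elements of $\Psi_+$ together with the binomial bound justifies interchanging summation and integration.

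The hard part is the inverse Fourier transform of the radial power $\Norm\boldsymbol{\omega}\Norm^{-z}$. The Riesz formula above is classical only in the strip $0 < \Re z < s$, where $\Norm\cdot\Norm^{-z}$ is locally integrable; to reach all $z\in\C_1$ one must regard $\Norm\cdot\Norm^{-z}$ as the tempered distribution obtained by analytic continuation in $z$, equivalently as a pseudo-function in the sense of Hadamard's partie finie (cf. the remark after the definition of $(\bullet)^z$ in Section \ref{sec3}). The essential point is that the pairing is taken against functions $g\in\Psi_+$, all of whose moments vanish by Proposition \ref{prop1}: on such $g$ the map $z\mapsto\skpl\Norm\cdot\Norm^{-z},g\skpr$ is holomorphic, exactly as observed in Section \ref{sec3} for $z\mapsto\skpl K_z,\varphi\skpr$, so the poles of the homogeneous distribution are suppressed and the continuation is unambiguous. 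Establishing this holomorphy and controlling the simultaneous interchange of summation, integration and analytic continuation is, I expect, the principal technical obstacle; once it is in place, the formula follows from the elementary bookkeeping above.
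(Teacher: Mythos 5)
Your route is the same one the paper takes: its proof consists of the single remark that one inverts the Fourier transform of (\ref{Standard multi Spline in Fourier domain}), deferring all details to \cite{FM11a}, and your computation --- the binomial expansion $(1-e^{-i\skpl\boldsymbol{\omega},\boldsymbol{d}\skpr})^z=\sum_{n\geq 0}(-1)^n\left({z\atop n}\right)e^{-in\skpl\boldsymbol{\omega},\boldsymbol{d}\skpr}$, justified by $\sum_{n\geq 0}|\left({z\atop n}\right)|\leq \const\, e^{|z-1|}$, followed by term-by-term inversion of $\Norm\boldsymbol{\omega}\Norm^{-z}e^{-in\skpl\boldsymbol{\omega},\boldsymbol{d}\skpr}$ via the Riesz kernel and the modulation--translation correspondence --- is exactly that computation, with the correct classical Riesz constant.

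There is, however, a concrete flaw in the step you yourself single out as the principal obstacle. You justify the holomorphy of $z\mapsto\skpl\Norm\cdot\Norm^{-z},g\skpr$ for $g\in\Psi_+$ by asserting that ``all moments of $g$ vanish by Proposition \ref{prop1}''. Proposition \ref{prop1} characterizes the Lizorkin space $\Phi$, not $\Psi$: it is the elements of $\Phi$ that have vanishing moments, while the defining property of $\Psi=\mcF\Phi$ is infinite-order vanishing at the origin (on the coordinate hyperplanes). The distinction is not cosmetic here. The meromorphic continuation of $\skpl\Norm\omega\Norm^{-z},h\skpr$ has simple poles at $z=s+2k$, $k\in\N_0$, with residues proportional to $(\Delta^k h)(0)$, so the poles are suppressed precisely when the \emph{frequency-side} test function $h$ vanishes to infinite order at $\omega=0$. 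In the weak inversion you perform, the frequency-side test function is not $g$ itself but (a reflection and normalization of) $\widehat{g}$; for $g\in\Psi_+$ the transform $\widehat{g}$ is of Lizorkin type, and $(\Delta^k\widehat{g})(0)$ is, up to constants, the radial moment $\int\Norm x\Norm^{2k}g(x)\,dx$, which has no reason to vanish. Hence pairing against $\Psi_+$ does \emph{not} make the continuation entire: the poles at $z\in s+2\N_0$ persist, exactly where the factor $\Gamma\left(\frac{s-z}{2}\right)$ in the stated constant blows up. The repair is either to restrict to $z\in\C_1\setminus(s+2\N_0)$, where the Riesz identity $\mcF[\Norm\cdot\Norm^{-z}]=c_{s,z}\Norm\cdot\Norm^{z-s}$ holds in $\mcS'$ by plain meromorphic continuation and no special test space is needed at all, or, at the exceptional orders, to pair against genuinely Lizorkin test functions; the holomorphy statement for $\Psi$-pairings that the paper records for $z\mapsto\skpl K_z,\varphi\skpr$ rests on the vanishing-at-the-origin property, not on vanishing moments, and applies only when the kernel and the test function live in the same variable.
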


\begin{proof}
The proof employs the inverse Fourier transform of (\ref{Standard multi Spline in Fourier domain}). For more details, please consult \cite{FM11a}.
\end{proof}

So far we have considered polynomial-type splines, i.e., solutions of Equation (\ref{cs}). It is a natural question to ask whether there are other types of splines that are the solution of more general differential operators. The answer is affirmative and found in \cite{karlin} for orders $n\in\N$. In \cite{unserblu05} exponential splines were defined as the solution of linear differential operators $\mcL$ with constant coefficients. (See also \cite{masso}.) 

Recently, a univariate {\em exponential spline $E^a_z$ of complex order} $z\in\C_1$ was introduced in \cite{M13}. This particular spline is a solution of the (distributional) differential equation
\[
(\mcD + a I)^{z} f = \sum_{\ell=0}^\infty c_\ell \,\delta (\bullet -\ell),\quad a\in \R,
\]
where $I$ denotes the identity operator on $\Psi_+$ and $\{c_\ell\st \ell\in \N\}$ is an $\ell^\infty$-sequence, and possesses a Fourier domain representation of the form
\[
\widehat{E}_z^a (\omega,a):=\left( \frac{1-e^{-(a+i\omega)}}{a+i\omega}\right)^z.
\]
The function $\Omega(\omega, a) := \left( \frac{1-e^{-(a+i\omega)}}{a+i\omega}\right)^z $ is only well-defined for $a \geq 0$. Results for and properties of this new class of splines will be presented elsewhere.

%
%
%

\end{document}